\DeclareMathOperator{\Gl}{GL}
\newcommand{\7}{B^2A^2B}
\newcommand{\0}{A}
\newcommand{\2}{A^2}
\newcommand{\1}{B}
\newcommand{\5}{B^2AB}
\newcommand{\4}{A^2BA}
\newcommand{\3}{B^2}
\newcommand{\calR}{\mathcal{R}}
\newcommand{\calC}{\mathcal{C}}
\renewcommand{\phi}{\varphi}
\newtheorem{theorem}{Theorem}[section]
\newtheorem{lemma}[theorem]{Lemma}
\newtheorem{remark}[theorem]{Remark}
\newtheorem{conjecture}[theorem]{Conjecture}
\newtheorem{proposition}[theorem]{Proposition}
\newtheorem{definition}[theorem]{Definition}
\newtheorem*{recap}{Theorem~\ref{transthm}}
\newtheorem{question}[theorem]{Question}
\title{Hurwitz Transitivity of Longer Reflection Factorizations in G4 and G5}
\author{Zachery Peterson}
\date{}
\begin{document}
	\maketitle
\begin{abstract}
	We prove that the Hurwitz action on reflection factorizations of Coxeter elements is transitive up to certain natural constraints in the complex reflection groups G4 and G5.  This affirms a more general conjecture by Lewis and Reiner in these specific cases.  The proof uses induction on length of the factorization using the fact that the square of a reflection is also a reflection.
\end{abstract}
\section{Introduction}
  The goal of this paper is to provide a proof of the following conjecture by Lewis and Reiner in the specific case of the complex reflection groups G4 and G5.
  \begin{conjecture}[{\cite[Conj. 6.3]{LR}}]\label{conj}
  	In a well-generated finite complex reflection group, two reflection factorizations of a Coxeter element lie in the same Hurwitz orbit if and only if they share the same multiset of conjugacy classes.
  \end{conjecture}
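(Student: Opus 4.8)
The plan is to prove the two directions separately. The forward implication (``same Hurwitz orbit $\Rightarrow$ same multiset of conjugacy classes'') is immediate: a Hurwitz move sends a pair $(s,t)$ to $(sts^{-1},s)$ or to $(t,t^{-1}st)$, replacing one reflection by a conjugate and leaving the other fixed, so the multiset of conjugacy classes is a Hurwitz invariant. All of the content lies in the converse, and I would organize the whole argument as a reduction to small rank followed by an induction on the length of the factorization.

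First I would reduce to the irreducible case. A finite complex reflection group decomposes as a direct product of irreducible ones, a Coxeter element is the product of Coxeter elements of the factors, and every reflection lies in a single factor; since a Hurwitz move only ever involves two reflections, which generate a subgroup of rank $\le 2$, factorizations can be analyzed factor by factor, so it suffices to treat irreducible well-generated groups. These are enumerated by the Shephard--Todd classification: the two infinite families $G(e,1,n)$ and $G(e,e,n)$ (which subsume the real types $A$, $B$, $D$ and the dihedral groups) together with a finite list of exceptional groups. The aim is a uniform inductive scheme in which the exceptional groups of each fixed small rank are handled by the explicit, and if necessary computer-assisted, method of the present paper, of which $G_4$ and $G_5$ are the first instances.

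The inductive skeleton would be induction on the length $k$ of the factorization, with base case $k=n$ (the rank), where the statement is already known: for well-generated groups the reduced reflection factorizations of a Coxeter element form a single Hurwitz orbit, by Bessis's dual braid monoid and the lattice of $c$-noncrossing partitions, so in minimal length the multiset of conjugacy classes is automatically constant and there is exactly one orbit. For the inductive step I would establish a reduction lemma: any factorization of length $k>n$ is Hurwitz-equivalent to one in which two reflections share a common hyperplane. Once such a pair is made adjacent, both reflections lie in a common cyclic reflection subgroup $C_d$, in which Hurwitz moves act trivially on conjugacy and the product of the two is again a power $s^j$; if $j\not\equiv 0 \pmod{d}$ this power is a single reflection and the length drops by one, while if $j\equiv 0\pmod{d}$ the pair cancels, and in either case one obtains a shorter factorization to which the induction hypothesis applies. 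The rank-$1$ case underpinning this is trivial, since in a cyclic group Hurwitz moves merely permute the entries, so the orbit is determined by the multiset of reflections.

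The hard part will be twofold. First, the reduction lemma itself: producing, by Hurwitz moves alone, two reflections on a common hyperplane out of an arbitrary over-long factorization demands genuine structural input. In rank $2$ this is precisely the crux handled directly here, and the program requires generalizing to arbitrary reflection order $d$ the order-$3$ identity $s^2=s^{-1}$ on which the $G_4$ and $G_5$ arguments rest; in rank $\ge 3$ it must be deduced from the geometry of parabolic subgroups and the growth of fixed spaces along partial products. Second, and more delicately, collapsing two reflections about a shared hyperplane into the single reflection $s^j$ need not preserve conjugacy class, because $s$ and $s^j$ can fail to be conjugate in the ambient group; hence the bookkeeping of the conjugacy-class multiset through each reduction is nontrivial, and making the statement uniform will likely force a case analysis along the Shephard--Todd list. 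This last point is the principal obstacle to extending the argument from $G_4$ and $G_5$ to all well-generated complex reflection groups.
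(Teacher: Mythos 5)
Your forward direction is fine and matches the paper's observation (Remark \ref{multisetrem}) that a Hurwitz move replaces one reflection by a conjugate and so preserves the multiset of conjugacy classes. But for the converse you have written a program, not a proof, and the two places you yourself flag as ``the hard part'' are exactly where all the mathematical content lives---and they are not filled in. The statement you were given is a conjecture (Lewis--Reiner); it is open in general, and this paper proves it only for $G_4$ and $G_5$ (Theorem \ref{transthm}). Your inductive skeleton---base case given by Bessis' transitivity in minimal length, reduction of an over-long factorization to one containing a collapsible pair, then induction on length---is precisely the paper's strategy, but the paper can only establish the reduction step for $G_4$ and $G_5$ (Lemmas \ref{pairlem} and \ref{g5pairlem}), via a detailed case analysis of the Hurwitz orbits of length-two factorizations in every conjugacy-class configuration, using facts special to groups all of whose reflections have order $3$. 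Your general ``reduction lemma'' is essentially the paper's Conjecture \ref{genconj}, which the paper points out is more or less equivalent to Conjecture \ref{conj} itself; so as it stands, that lemma is an assumption of what is to be proved, not a step toward proving it.

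Second, even granting the reduction lemma, your inductive step does not close. It is not enough that each long factorization reduces to some shorter one to which the induction hypothesis applies: you must (a) arrange that the two reductions have the \emph{same} multiset of conjugacy classes---which requires controlling which class the collapsed pair lands in, the $s$ versus $s^j$ issue you raise but do not resolve---and (b) lift a chain of Hurwitz moves between the two shortened factorizations back to the original longer ones, including the case where the two reductions agree as factorizations but the collapsed entries sit at different positions or in different classes. The paper handles (a) by noting that its pair-finding procedure determines the class of the $(t,t)$ pair from the conjugacy-class data alone, and handles (b) with marked and encoded factorizations (Definitions \ref{marked} and \ref{duple}), a move-simulation lemma (Proposition \ref{hurprop}), and a mark-repositioning lemma (Proposition \ref{reposprop})---the last of which depends on a same-class pair having Hurwitz orbit of size exactly three, again an order-$3$ phenomenon with no analogue supplied in your proposal for reflections of general order $d$. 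Until both the reduction lemma and this lifting machinery are proved in the generality you claim, your argument establishes nothing beyond the cases already treated here.
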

   (Definitions for terminology used here may be found in Section \ref{background}.)  Lewis and Reiner prove a special case of Conjecture \ref{conj} in the case of the real reflection groups.  Their proof involves finding pairs of consecutive equal reflections in reflection factorizations and using them to relate longer factorizations to shorter ones for the purpose of induction \cite{LR}.  This induction is founded by a base case given by a theorem of Bessis \cite{Bess03}, similar to Theorem \ref{thm:Bessis} for real reflection groups.  Our proof here follows a similar strategy with a more elegant form of reduction that takes advantage of some unique facts which are true in reflection groups which have exclusively reflections of order 3. The result is a proof of the following theorem.
	\begin{theorem}\label{transthm}
		Let $W$ be one of the reflection groups G4 or G5 and let $c\in W$ be a Coxeter element. Two reflection factorizations $(t_1,t_2,\ldots,t_k)$ and $(t_1',t_2',\ldots, t_k')$ of $c$ are in the same Hurwitz orbit if and only if they have the same multiset of conjugacy classes.
	\end{theorem}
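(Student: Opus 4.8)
The forward implication is immediate: each Hurwitz generator sends a consecutive pair $(t_i,t_{i+1})$ to $(t_{i+1}, t_{i+1}^{-1}t_i t_{i+1})$ (or to its inverse move), so every entry is only ever replaced by a conjugate of itself, and hence the multiset of conjugacy classes is constant along a Hurwitz orbit. The substance is the converse, which I would prove by induction on the common length $k$ (note that equality of multisets already forces $F$ and $F'$ to have the same length). Since $W$ has rank $2$, the length-$2$ factorizations of $c$ are exactly the reduced ones, and the base case $k=2$ is supplied by the known transitivity of the Hurwitz action on reduced reflection factorizations of a Coxeter element (Theorem~\ref{thm:Bessis}); because the multiset is an invariant, transitivity forces all reduced factorizations to share a single multiset, so the ``if and only if'' holds vacuously at $k=2$.

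The engine of the induction is the order-$3$ hypothesis: for any reflection $t$ one has $t\cdot t = t^2 = t^{-1}$, which is again a reflection. I would therefore define a \emph{contraction} sending a length-$k$ factorization with an adjacent equal pair, $(\ldots,t,t,\ldots)$, to the length-$(k-1)$ factorization $(\ldots,t^2,\ldots)$. This preserves the product $c$ and alters the multiset in a fully determined way, deleting two copies of the class $[t]$ and inserting one copy of $[t^{-1}]$; its inverse \emph{expands} a single reflection $s$ into the adjacent equal pair $(s^2,s^2)$. The plan is to contract from length $k$ to length $k-1$, apply the inductive hypothesis there, and lift the resulting equivalence back up.

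Concretely, given length-$k$ factorizations $F,F'$ of $c$ with $k\geq 3$ and the same multiset, I would first show each is Hurwitz-equivalent to one containing an adjacent equal pair, and arrange these pairs to lie in the same conjugacy class; contracting then produces length-$(k-1)$ factorizations $G,G'$ which, by the bookkeeping above, again share a multiset, so $G\sim G'$ by induction. To descend back I would establish a lifting lemma: a single Hurwitz move on the contraction is simulated by a short sequence of Hurwitz moves on the expansion that transport the adjacent pair as a rigid block, conjugating both copies identically so they remain equal. Replaying this simulation along a sequence realizing $G\sim G'$ upgrades it to a Hurwitz equivalence between the expansions, and hence between $F$ and $F'$.

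The main obstacle is the reduction step itself: forcing an arbitrary non-reduced factorization, by Hurwitz moves, to exhibit an adjacent equal pair, and with enough control on that pair's conjugacy class that the contractions of $F$ and $F'$ stay compatible. Equal reflections that occur need not be adjacent, and sliding one toward another conjugates it, so this is far from automatic; I expect this is precisely where the special structure of G4 and G5 enters---the small number of reflecting hyperplanes, the fact that all reflections have order $3$, and the explicit list of conjugacy classes---most likely through a finite, group-specific case analysis. A secondary delicate point is matching the position of the pair produced in $F'$ to the one produced by lifting from $F$; rather than prove such a repositioning statement in general, I would absorb it into the same structural analysis, since in general it fails across distinct classes and holds only under the multiset constraint we have arranged.
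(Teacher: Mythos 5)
Your overall architecture matches the paper's exactly: the forward direction via invariance of the conjugacy-class multiset, and the converse by induction on length, contracting an adjacent equal pair $(t,t)$ to the single reflection $t^2=t^{-1}$ (valid because all reflections have order $3$), applying the inductive hypothesis to the shortened factorizations, and lifting the equivalence back via a simulation lemma in which the pair is transported as a rigid block --- this last step is precisely the paper's Proposition~\ref{hurprop} and Lemma~\ref{reduxlem}, and your ``matching the position/class of the pair'' concern is handled there by Proposition~\ref{reposprop}.

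The genuine gap is that the step you flag as ``the main obstacle'' is in fact the bulk of the paper, and you leave it entirely unexecuted. The paper fills it in Steps 1--2: it first classifies the Hurwitz orbits of all length-two factorizations by the conjugacy classes of their entries (Remarks~\ref{2facrem}--\ref{neqrem} for G4, Remarks~\ref{g5commrem}--\ref{g5nonadjrem} for G5), then uses Proposition~\ref{sortingprop} to sort the classes into a convenient order and a pigeonhole argument on the four-element conjugacy classes to force a repeated reflection in adjacent positions (Propositions~\ref{3facprop}, \ref{3facprop2}, \ref{4facprop} and their G5 analogues); the control over the conjugacy class of the resulting pair comes for free from the fact that these arguments depend only on the starting arrangement of classes. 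A second, concrete omission: your induction assumes the pair-finding step works for every $k\geq 3$, but in G5 it only succeeds for $k\geq 5$ (Lemma~\ref{g5pairlem}), so lengths $3$ and $4$ must be added as separate base cases --- the paper disposes of them by exhaustive computation (Proposition~\ref{g5induction}). Without these two ingredients your induction does not close.
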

	Section \ref{background} gives the background information needed to understand the statement of the theorem and the concepts used in the proof.  Section \ref{outline} outlines the strategy used in Section \ref{proofsec} to build up the proof.  Finally, Section \ref{P.S.} gives some additional remarks, discoveries, and open questions that resulted from the exploration of this proof, including possible extensions to other groups and conjectures about counting.
	\section{General Background}\label{background}
		Consider an $n$ dimensional vector space.  This vector space contains a set of $(n-1)$-dimensional \textit{hyperplanes}.  We may define a set of linear transformations in this space called \textit{reflections}, which are the elements of $\Gl(V)$ which fix these hyperplanes.  Suppose this vector space lies over the complex field.  If these reflections generate a finite group, this group is called a \textit{complex reflection group}.
		
		There are 37 irreducible finite complex reflection groups, including three infinite families of groups, as classified by Shephard and Todd \cite{ST}.  The best-known example is the symmetric group $S_m$, the group of permutations of a set of $m$ objects.  It is well-known that $S_m$ is generated by a set of $\binom{m}{2}$ transpositions, which in this case act as the reflections.
		
		Since reflection groups are generated by reflections, every element may be written as a product of reflections.  These products, written as tuples, are called \textit{reflection factorizations}.  One of the interesting ways to manipulate these factorizations is through \textit{Hurwitz moves}.  Given a factorization $(t_1,t_2,\ldots,t_k)$ of an element, the $i$-th Hurwitz move, $\sigma_i$, where $i$ is an integer $1\leq i\leq k-1$, takes it to
		$$
		(t_1,\,t_2,\ldots t_{i-1},\quad t_{i+1},\quad t_{i+1}^{-1}\cdot t_i\cdot t_{i+1},\quad t_{i+2},\ldots,\,t_k).
		$$
		That is to say the $i$-th Hurwitz move transposes the $i$-th and $(i+1)$-th elements of a factorization while conjugating the $i$-th by the $(i+1)$-th.\footnote{It can be easily proven that these moves give rise to an action of the braid group on $k$ strands, and therefore that they are invertible.  This is discussed in greater detail by Bessis \cite{Bess03}.}
		
		It should be noted that this action preserves the product and thus the result is still a reflection factorization of the same element.  The \textit{Hurwitz orbit} of a factorization is the set of all factorizations that can be achieved by performing a series of Hurwitz moves.  Additionally, the Hurwitz move does not affect the conjugacy classes of the pair of elements on which it acts, shifting the position of one and conjugating the other by the first.  Thus, we have the following remark.
			\begin{remark}\label{multisetrem}
				A Hurwitz move on a reflection factorization has no effect on the multiset of conjugacy classes of the elements.
			\end{remark}
		This remark is discussed further by Bessis \cite[Prop. 1.6.1]{Bess03}.	We also rely on the following principle in many of the proofs to come.
		\begin{proposition}\label{sortingprop}
			A reflection factorization with a given multiset of conjugacy classes has, in its Hurwitz orbit, factorizations with all possible permutations of those conjugacy classes.
		\end{proposition}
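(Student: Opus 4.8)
The plan is to show that each single Hurwitz move permutes the underlying sequence of conjugacy classes by an adjacent transposition, and then to invoke the fact that adjacent transpositions generate the full symmetric group. First I would unwind the definition of the $i$-th Hurwitz move $\sigma_i$ applied to $(t_1,\ldots,t_k)$: the entry that lands in position $i$ is $t_{i+1}$, and the entry that lands in position $i+1$ is $t_{i+1}^{-1}\cdot t_i\cdot t_{i+1}$, while every other position is left untouched. Since conjugation preserves conjugacy class, the new entry in position $i+1$ lies in the same class as the old $t_i$, and the new entry in position $i$ lies in the class of the old $t_{i+1}$. Reading off the conjugacy class of each entry, $\sigma_i$ therefore swaps the classes occupying positions $i$ and $i+1$ and fixes the rest. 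In other words, $\sigma_i$ induces on the length-$k$ sequence of conjugacy classes exactly the adjacent transposition $(i\ \ i+1)$. This is the content of Remark \ref{multisetrem}, sharpened from the statement that the \emph{multiset} of classes is preserved to the statement that the \emph{ordered sequence} of classes is acted on by an adjacent transposition.

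Next I would compose these moves. As $i$ ranges over $1,\ldots,k-1$, the moves $\sigma_1,\ldots,\sigma_{k-1}$ induce the adjacent transpositions $(1\ \ 2),\ldots,(k-1\ \ k)$ on the sequence of classes, and it is standard that these transpositions generate the symmetric group $S_k$. Hence any desired permutation of the sequence of conjugacy classes can be written as a product of adjacent transpositions, and applying the corresponding sequence of Hurwitz moves produces, inside the Hurwitz orbit of the original factorization, a factorization whose classes appear in that prescribed order. Since the target permutation was arbitrary, every ordering of the given multiset of conjugacy classes is realized, which is exactly the assertion of the proposition.

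I do not expect a substantive obstacle in this argument; the proof is essentially a translation of the definition of $\sigma_i$ into an action on positions. The only point deserving care is the bookkeeping when several moves are composed: one must check that the effect of each move on the class sequence depends only on the \emph{positions} it acts on and not on the particular group elements currently occupying them, so that the composite action is genuinely the corresponding product in $S_k$. This is immediate from the observation above, since the induced map always swaps the two class labels in positions $i$ and $i+1$ regardless of which reflections sit there.
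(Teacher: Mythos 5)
Your proof is correct and takes essentially the same approach as the paper: both rest on the observation that a Hurwitz move swaps the conjugacy classes occupying positions $i$ and $i+1$, so that composing such moves realizes any permutation of the class sequence. The paper merely packages the generation of $S_k$ by adjacent transpositions as an explicit sorting procedure (repeatedly cycling an element of the desired class into the leftmost out-of-place position via $\sigma_j\circ\sigma_{j-1}\circ\cdots\circ\sigma_i$), whereas you cite the generation fact directly.
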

		This fact is true for all reflection groups.  We will use it to begin proofs with factorizations that have convenient permutations of conjugacy classes for our purposes.
		\begin{proof}
			It suffices to prove that any particular permutation of conjugacy classes may be reached.  Take a permutation of a multiset of conjugacy classes of a reflection factorization.  Scan the reflection factorization from left to right until an element is found out of position.  From here, continue to scan until an element from the class which is desired for that position is found.  Note the position of the former, call it $i$, and the latter, call it $j$.  Perform the braid action $\sigma_j\circ\sigma_{j-1}\circ\cdots\circ\sigma_{i}$.  This has the effect of moving latter element to position $i$ in the factorization without affecting the conjugacy class of any other position prior to $i$.  Iterate this process until all elements are in position.
		\end{proof}
		Recall that reflections are linear transformations.  Note that all linear transformations can be represented as matrices.  Thus, it is permissible to think of complex reflection groups as matrix groups generated by the reflection matrices, which are the irreducible matrix representations of the linear transformations.  Thus, we can define the \textit{rank} of the group in the traditional sense as the rank of the irreducible matrix representations.  With the concept of rank, we introduce the idea of a group being \textit{well-generated}.  We say a complex reflection group $W$ of rank $n$ is \textit{well-generated} if there exists a set of reflections $S$ such that $S$ generates $W$ and $|S|=n$.  Given a well-generated complex reflection group of rank $n$, define $\calR$ to be the set of reflections and $\mathcal{A}$ to be the set of reflecting hyperplanes.  Such a group will have a \textit{Coxeter number} $h$ defined as
		$$
		h=\frac{|\calR|+|\mathcal{A}|}{n}.
		$$
		Elements that have some primitive $h$-th root of unity as an eigenvalue are called \textit{Coxeter elements}.\footnote{There is some division in the mathematical community as to the accepted definition of Coxeter elements.  The alternative, more narrow definition is that Coxeter elements have an eigenvalue of $e^{2\pi i/h}$.  Reiner, Ripoll, and Stump show that many assertions have a bijectional relationship between these two definitions, including all assertions presented here \cite{RRS}.}  These elements are of particular interest because of their properties in complex reflection groups and their Hurwitz orbits.  An example of this was observed by Bessis in the following theorem.
		\begin{theorem}[{Bessis' Theorem \cite[Prop. 7.6]{Bess15}}]
    \label{thm:Bessis}
			Let $W$ be a well-generated complex reflection group and let $c$ be a Coxeter element in $W$. The Hurwitz action is transitive of the set of shortest reflection factorizations of $c$.
		\end{theorem}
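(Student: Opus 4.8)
The plan is to argue by induction on the rank $n$ of $W$, reducing the global statement to a single rank-$2$ base case. First I would reduce to the case that $W$ is irreducible: if $W$ splits as a product of irreducible reflection groups, then $c$ and each of its shortest factorizations split compatibly across the factors, so transitivity for $W$ follows from transitivity on each factor together with the reordering furnished by Proposition~\ref{sortingprop}. Assume then that $W$ is irreducible of rank $n$, and recall that the shortest reflection factorizations of $c$ are exactly those of length $n$; these are in bijection with the maximal chains of the interval $[e,c]$ in absolute (reflection-length) order, with the Hurwitz action corresponding to local ``rotation'' of consecutive covers of such a chain.

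The key structural inputs I would invoke are that $[e,c]$ is a lattice and that for every $w \leq c$ the subinterval $[e,w]$ is itself the lattice of a shortest-factorization problem: $w$ is a Coxeter element of a well-generated parabolic whose rank equals $\ell(w)$. Granting these, the inductive step runs as follows. Given a shortest factorization $(t_1,\dots,t_n)$, the Hurwitz moves $\sigma_2,\dots,\sigma_{n-1}$ fix the first entry and act as the Hurwitz action on the length-$(n-1)$ factorization $(t_2,\dots,t_n)$ of $t_1^{-1}c$; by the induction hypothesis applied to the rank-$(n-1)$ element $t_1^{-1}c$, this tail can be standardized. Consequently the Hurwitz orbit of a factorization is determined entirely by the equivalence class of its admissible \emph{first reflections}, where two reflections are deemed equivalent when they co-occur in some common shortest factorization of $c$ (using $\sigma_1$ to slide any entry to the front and then restandardizing the tail). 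Transitivity of the Hurwitz action therefore follows once we know that the graph $G$ on the reflections below $c$, with an edge joining any two reflections that co-occur in a shortest factorization, is connected.

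To prove $G$ connected I would argue in two stages. Any two reflections $t,s \leq c$ whose join $t \vee s$ in $[e,c]$ has reflection length $2$ lie in a common rank-$2$ subinterval $[e,\,t\vee s]$, so they are connected in $G$ as soon as the base case is known for that rank-$2$ parabolic. It then remains to see that the ``rank-$2$ join graph'' on the reflections---with an edge whenever the join has length $2$---is connected; here I would use irreducibility of $W$, since a disconnection of that graph would force the reflections below $c$ to split into independent blocks, exhibiting $c$ as a product over a nontrivial decomposition and contradicting the indecomposability of a Coxeter element of an irreducible group. This reduces everything to the rank-$2$ case: for a rank-$2$ well-generated group with Coxeter element $c$, one must show that the pairs $(t,s)$ of reflections with $ts=c$ form a single Hurwitz cycle.

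I expect the rank-$2$ base case to be the main obstacle, since it admits no uniform conceptual argument and must be settled by a direct analysis of each rank-$2$ group in the Shephard--Todd classification---exactly the style of explicit computation that the present paper carries out in studying $G_4$ and $G_5$. A secondary difficulty is that the two structural inputs are themselves substantial: the lattice property of $[e,c]$ for complex well-generated groups was established only with considerable effort and some case-checking. For this reason Bessis' own proof in \cite{Bess15} bypasses the lattice route and instead establishes connectivity geometrically, via the monodromy of the Lyashko--Looijenga covering of the discriminant, where the shortest factorizations of $c$ appear as labelled fibers and Hurwitz transitivity becomes the connectedness of that covering space.
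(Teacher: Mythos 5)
The paper never proves this statement: Theorem~\ref{thm:Bessis} is imported verbatim from Bessis \cite[Prop.~7.6]{Bess15} and used purely as a black box, supplying the base case ($m=2$, supplemented in G5 by Proposition~\ref{g5induction}) for the induction in Section~\ref{proofsec}. So there is no proof in the paper to compare yours against; the only meaningful comparison is with Bessis' own argument, which, as you correctly observe in your closing paragraph, goes through the monodromy of the Lyashko--Looijenga covering rather than through the lattice-theoretic reduction you sketch.

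Judged on its own, your proposal has genuine gaps. First, the two ``structural inputs'' you grant are not background facts but theorems of essentially the same depth as the statement itself: for well-generated complex groups, the lattice property of $[e,c]$ and the fact that every $w \leq c$ is a Coxeter element of a well-generated parabolic of rank $\ell(w)$ were established by Bessis in the very same paper (partly by case-checking), and in that development they are intertwined with the transitivity result --- so a proof that ``grants'' them is at real risk of circularity. Second, your connectivity step is asserted, not proven: a disconnection of the rank-$2$ join graph does not visibly produce a decomposition of $W$ or of $c$; you would need reflections in distinct components to generate mutually commuting subgroups acting on orthogonal subspaces, and nothing in the sketch delivers this. (Note also that the join of two reflections below $c$ can have length greater than $2$ --- crossing transpositions in type $A$ are the standard example --- so the graph you must connect is strictly sparser than the co-occurrence graph, making the claim harder, not easier.) Third, the rank-$2$ base case, a single Hurwitz cycle on the pairs $(t,s)$ with $ts=c$ for every rank-$2$ well-generated group in the Shephard--Todd list, is deferred entirely to case-by-case computation. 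Your outline is a sensible reduction scheme --- it is close in spirit to how the real case is handled, e.g.\ in the induction of \cite{LR} and the sources behind \cite{Bess03} --- but as it stands it is a plan resting on unproven inputs, not a proof of the complex case.
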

		Of note for us, Bessis' Theorem proves Conjecture \ref{conj} for the shortest reflection factorizations.
		
\subsection{G4 Background}		
		One of the simplest complex reflection groups is G4, so called because it is the fourth in the Shephard-Todd list of complex reflection groups \cite{ST}.  The group G4 is generated by a pair of reflections $A$ and $B$ each of which cubes to the identity matrix and have the property that $ABA=BAB$.  It has the property that if $t_i$ is a reflection then its square is also a reflection.  Additionally, due to the nature of the reflection groups which have exclusively reflections of order 3, this element is also the inverse.  One example of a generating pair of reflection matrices is the following
		$$
		A=\begin{bmatrix}
		1 & 0\\
		0 & \omega
		\end{bmatrix},\quad
		B=\begin{bmatrix}
		\frac{\omega-\omega^2}{3} & \omega^2\\
		\\\frac{-2\omega^2}{3} & \frac{-\omega-2\omega^2}{3}
		\end{bmatrix}
		$$
		where $\omega=e^{\frac{2\pi i}{3}}$ is one of the complex third roots of unity.
		
		G4 contains 24 elements, each of which is a $2\times2$ complex matrix.  It contains eight Coxeter elements and eight reflections, both split into two conjugacy classes.  The set of all reflections may be written as
		\begin{equation*}
		\calR=\{\underbrace{A,\,B,\,A^2BA,\,B^2AB}_{\calR_1},\underbrace{A^2,\,B^2,\,A^2B^2A,\,B^2A^2B}_{\calR_2}\}
		\end{equation*}
		and the set of all Coxeter elements $\mathcal{C}$ may be written as
		\begin{equation*}
		\mathcal{C}=\{\underbrace{A^2B^2,\,B^2A^2,\,BA^2B,\,AB^2A}_{\calC_1},\underbrace{AB,\,BA,\,B^2AB^2,\,A^2BA^2}_{\calC_2}\}.
		\end{equation*}
		In the reflections, the first four elements form one conjugacy class, call it $\calR_1$, and the last four form the other, call it $\calR_2$.  The eigenvalues of the reflections are 1 and $\omega$ for $\calR_1$ and 1 and $\omega^2$ for $\calR_2$.  The Coxeter elements have eigenvalues $-1$ and $-\omega$ for the first four, call them $\calC_1$, and $-1$ and $-\omega^2$ for the last four, $\calC_2$.  Note that $-\omega=e^{\frac{5\pi i}{3}}$, which makes $-\omega$ and $-\omega^2$ primitive sixth roots of unity.
		
		\begin{figure}[h]
			\centering
			\includegraphics[scale=.1]{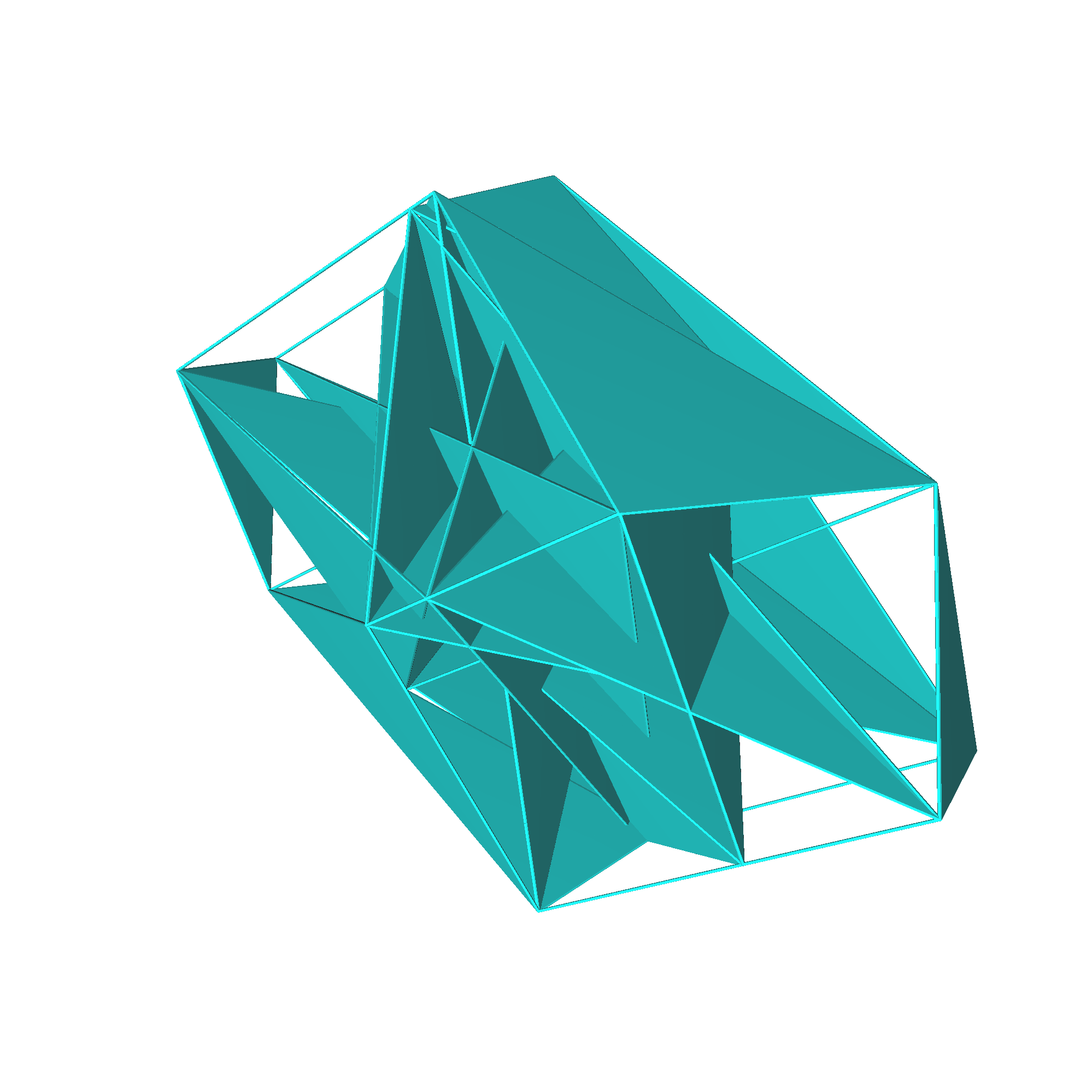}
			\caption{A computer generated three dimensional image of G4.  The 24 vertices each represent one of the 24 elements of the group.  Two vertices are connected if one may be expressed as a product of the other and some reflection \cite{Liz}.}\label{G4im}
		\end{figure}
		
		Some of the information presented in the following sections is the result of empirical analysis of data.  In this analysis, we studied a single Coxeter element, rather than all of them.  We are able to do this because of the results of Reiner, Ripoll, and Stump \cite{RRS}.  Although our choice of Coxeter element was arbitrary, in examples we refer mainly to the Coxeter element
		$$
		AB=\begin{bmatrix}
		\frac{\omega-\omega^2}{3}&\omega^2\\\\
		\frac{-2}{3} & \frac{2\omega+\omega^2}{3}
		\end{bmatrix}.
		$$
		The Coxeter element $AB$ has eigenvalues $-1$ and $-\omega^2$ with eigenvectors $(\frac{\omega+2}{2},1)^T$ and $(\omega^2-1,1)^T$ respectively.  It has shortest factorizations $(A,B),\,(B,B^2AB)$, and $(B^2AB,A)$, and it is a member of $\calC_2$.

\subsection{G5 Background}
	Like G4, G5 is a relatively small complex reflection group of rank two generated by a pair of reflections $A$ and $B$.  These reflections, along with all other reflections in the group, have order 3, meaning they also cube to the identity and have the property that their square is their inverse.  The major structural differences in G5 stem from a defining property to do with its generators.  Where G4 has the relation $ABA=BAB$, G5 has the relation $ABAB=BABA$. 
	
	We outline a few specific facts about G5 which make it different from G4. To begin with, G5 has 16 reflections split into four conjugacy classes of four reflections each which we will call $\calR_1,\calR_2,\calR_3$, and $\calR_4$. These four conjugacy classes have the following properties.
	\begin{enumerate}
		\item For every reflection in $\calR_1$, its square and inverse is in $\calR_3$.  The same is true of $\calR_2$ and $\calR_4$.  We will refer to these as \textit{square classes}.
		\item For every reflection in $\calR_1$, there is an reflection in $\calR_2$ with which the former commutes and has the same eigenvalue.  The same is true of $\calR_3$ and $\calR_4$.  We call these pairs of reflections \textit{adjacent pairs} denoted with a $~\widehat{~}~$ and call these conjugacy classes \textit{adjacent classes}.
		\item For every reflection in $\calR_1$ there is a reflection in $\calR_4$ whose square is the adjacent pair of that reflection.  These pairs of elements are called \textit{semi-squares} denoted $\widehat{~}~^2$.  The same is true for $\calR_2$ and $\calR_3$.  We call these classes \textit{non-adjacent}.
		\item For every reflection, there are is a reflection in each other conjugacy class with which it commutes.  These four-tuples of commuting reflections we call \textit{commuting sets}.  Commuting sets consist of an element, its square, its adjacent pair, and its semi-square.
	\end{enumerate}
	\begin{figure}[h]
		\begin{tikzpicture}[n/.style={rectangle,draw=black,minimum size=1mm},i/.style={rectangle,draw=white, minimum size=1mm}]
		\node[n](center){$\calR_1$};
		\node[i](uplabel)[right=of center, label={$\widehat{t}$}]{};
		\node[n](top)[right=of uplabel]{$\calR_2$};
		\node[i](llabel)[below=of center]{$t^2\quad\quad$};
		\node[n](left)[below=of llabel]{$\calR_3$};
		\node[i](rlabel)[below right=of center]{$\quad\widehat{t}~^2$};
		\node[i](rrlabel)[below=of top]{$\quad\quad t^2$};		
		\node[n](right)[below=of rrlabel]{$\calR_4$};
		\node[i](blabel)[right= of left, label={$\widehat{t}$}]{};
		
		\draw[ultra thick, green] (center)--(right);
		\draw[ultra thick,blue] (top)--(right);
		\draw[ultra thick,red] (top)--(center);
		\draw[ultra thick,red] (right)--(left);
		\draw[ultra thick,blue](center)--(left);
		\draw[ultra thick,green](left)--(top);	
		\end{tikzpicture}
		\caption{The relationships between the four conjugacy classes of G5 form a complete graph on four vertices.  Each edge represents a relationship between classes--either square classes (with a $t^2$) if the elements are squares, adjacent classes (with a $~\widehat{t}~$) if they have commuting elements of the same eigenvalues, or non-adjacent classes (with a $~\widehat{t}~^2$) if they have elements whose squares are adjacent elements.  We have color-coded the relationships--blue for $~^2$, red for $~\widehat{~}~$, and green for $~\widehat{~}~^2$.}\label{G5graph}
	\end{figure}
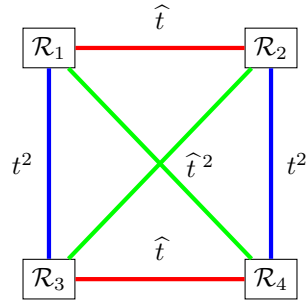
	The organization of these conjugacy classes gives rise to an intriguing remark.  We will make use of this remark to simplify certain cases to come.
	\begin{remark}\label{g5sub}
		G5 contains multiple copies of G4 as subgroups.  In particular, one of these subgroups has, as its reflections, $\calR_1\cup\calR_3$ while the other has $\calR_2\cup\calR_4$.
	\end{remark}
	It is not immediately obvious why this fact is true considering the different generating structures of the two groups, but it may be observed through empirical analysis.  As an analogue of this phenomenon, consider the more well-known family of dihedral groups, in particular the symmetries of the hexagon and triangle.  In the case of the hexagon, the reflections fall into two conjugacy classes which are inverses of each other.  A pair of non-square reflections from different conjugacy classes will generate the whole group, however a pair of reflections from within conjugacy classes will generate a subgroup which is isomorphic to the symmetries of the triangle.

\section{Proof Outline}\label{outline}
	Complex reflection groups with reflections of exclusively order three have the property that if $t$ is a reflection, so is $t^2$.  This fact leads to a connection between reflection factorizations of length $m$ with a consecutive $(t,t)$ pair and reflection factorizations of length $(m-1)$ where this pair is replaced by $t^2$.  The strategy for this paper is to exploit this fact in a useful manner.
	
	We must first demonstrate that given a reflection factorization, we can find reflection factorization in its Hurwitz orbit where one of these consecutive $(t,t)$ pairs appears.  To that end, \textit{Step 1} gives the details regarding the orbits of pairs of reflections from all combinations of conjugacy classes, namely their size and what elements appear therein.  These facts are utilized in \textit{Step 2} in order to show that every reflection factorization of a given length has in its Hurwitz orbit a reflection factorization with a consecutive $(t,t)$ pair.  In these two steps, the differences between G4 and G5 are multiple and make the sections more disparate.
	
	The next phase is to make concrete the relationship between the reflection factorizations with the consecutive $(t,t)$ and their shorter counterparts.  This sets up an inductive argument.  Base cases are outlined briefly in \textit{Step 3}.  Finally, \textit{Step 4} defines several structures which provide a mapping from Hurwitz moves made in shortened factorizations and the original longer factorizations.  In these steps, there are far fewer differences between the two groups, so they are more condensed.  The result is that reflections lie in the same Hurwitz orbit if there shortened counterparts do.  We then complete the proof.
	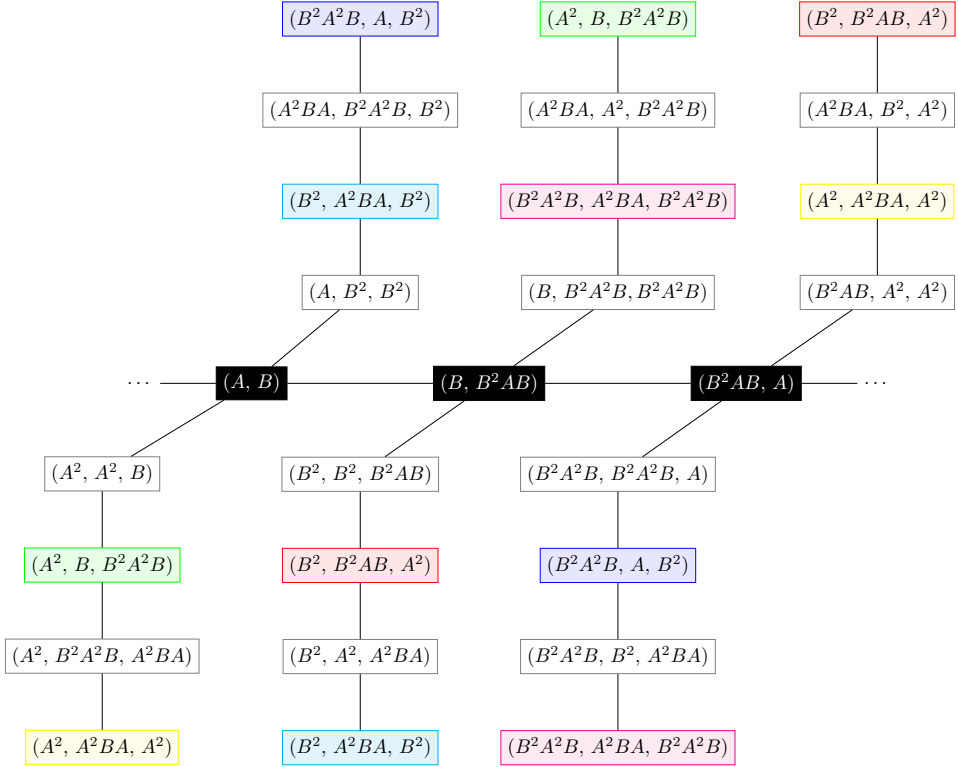
\begin{figure}[h]
		\resizebox{5in}{4in}{
			\begin{tikzpicture}[
			rednode/.style={rectangle, draw=red,fill=red!10,minimum size=1mm},
			bluenode/.style={rectangle,draw=blue,fill=blue!10,minimum size=1mm},
			greennode/.style={rectangle,draw=green,fill=green!10,minimum size=1mm},
			greynode/.style={rectangle,draw=gray,minimum size=1mm},
			wnode/.style={rectangle,fill=white,minimum size=6mm},
			ynode/.style={rectangle,draw=yellow,fill=yellow!10,minimum size=1mm},
			cnode/.style={rectangle,draw=cyan,fill=cyan!10,minimum size=1mm},
			mnode/.style={rectangle,draw=magenta,fill=magenta!10,minimum size=1mm},
			blacknode/.style={rectangle,draw=black,fill=black,minimum size=1mm}
			]
			\node[blacknode](righ){\textcolor{white}{$(\5,\,\0)$}};
			\node[wnode](rblank)[left=of righ]{};
			\node[blacknode](cent)[left=of rblank]{\textcolor{white}{$(\1,\,\5)$}};
			\node[wnode](lblank)[left=of cent]{};
			\node[blacknode](lef)[left=of lblank]{\textcolor{white}{$(\0,\,\1)$}};
			\node[wnode](llblank)[left=of lef]{$\cdots$};
			\node[wnode](ror)[right=of righ]{$\cdots$};
			\node[greynode](1)[above=of lblank]{$(\0,\,\3,\,\3)$};
			\node[greynode](2)[above=of rblank]{$(\1,\,\7,\7)$};
			\node[greynode](3)[above=of ror]{$(\5,\,\2,\,\2)$};
			\node[greynode](4)[below left=of lef]{$(\2,\,\2,\,\1)$};
			\node[greynode](5)[below=of lblank]{$(\3,\,\3,\,\5)$};
			\node[greynode](6)[below=of rblank]{$(\7,\,\7,\,\0)$};
			\node[cnode](7)[above=of 1]{$(\3,\,\4,\,\3)$};
			\node[greynode](8)[above=of 7]{$(\4,\,\7,\,\3)$};
			\node[bluenode](9)[above=of 8]{$(\7,\,\0,\,\3)$};
			\node[mnode](10)[above=of 2]{$(\7,\,\4,\,\7)$};
			\node[greynode](11)[above=of 10]{$(\4,\,\2,\,\7)$};
			\node[greennode](12)[above=of 11]{$(\2,\,\1,\,\7)$};
			\node[ynode](13)[above= of 3]{$(\2,\,\4,\,\2)$};
			\node[greynode](14)[above=of 13]{$(\4,\,\3,\,\2)$};
			\node[rednode](15)[above=of 14]{$(\3,\,\5,\,\2)$};
			\node[greennode](16)[below=of 4]{$(\2,\,\1,\,\7)$};
			\node[greynode](17)[below=of 16]{$(\2,\,\7,\,\4)$};
			\node[ynode](18)[below=of 17]{$(\2,\,\4,\,\2)$};
			\node[rednode](19)[below=of 5]{$(\3,\,\5,\,\2)$};
			\node[greynode](20)[below=of 19]{$(\3,\,\2,\,\4)$};
			\node[cnode](21)[below=of 20]{$(\3,\,\4,\,\3)$};
			\node[bluenode](22)[below=of 6]{$(\7,\,\0,\,\3)$};
			\node[greynode](23)[below=of 22]{$(\7,\,\3,\,\4)$};
			\node[mnode](24)[below=of 23]{$(\7,\,\4,\,\7)$};
			
			\draw (lef)--(4);
			\draw (lef)--(1);
			\draw (lef)--(cent);
			\draw (cent)--(2);
			\draw (cent)--(5);
			\draw (cent)--(righ);
			\draw (righ)--(3);
			\draw (righ)--(6);
			\draw (1)--(7);
			\draw (7)--(8);
			\draw (8)--(9);
			\draw (2)--(10);
			\draw (10)--(11);
			\draw (11)--(12);
			\draw (3)--(13);
			\draw (13)--(14);
			\draw (14)--(15);
			\draw (4)--(16);
			\draw (17)--(18);
			\draw (16)--(17);
			\draw (5)--(19);
			\draw (19)--(20);
			\draw (20)--(21);
			\draw (6)--(22);
			\draw (22)--(23);
			\draw (23)--(24);
			\draw (righ)--(ror);
			\draw (lef)--(llblank);
			
			\end{tikzpicture}}
		\caption{The transitivity of the length-three reflections factorizations of $AB$ depending on the transitivity of the length-two reflection factorizations (pictured in black).  Each line represents a Hurwitz move.  The colored factorizations appear twice.}\label{tree} 
	\end{figure} 
		
	As an illustration of the idea, Figure \ref{tree} shows reductions from length-three to length-two reflection factorizations in G4 as they relates to Hurwitz transitivity.  Pick any two length-three reflection factorizations $T_1,T_2$ of $AB$, note that each colored element appears twice.  Starting from $T_1$, move down the diagram until you reach one of the elements in its orbit with the desired $(t,t)$ pair.  From here, reduce to a length-two factorization, pictured in black.  To get to $T_2$, find which length-two factorization it stems from and move to it.  From there, switch back to a length-three factorization and follow up the branches until you are at $T_2$.

\section{The Proof}\label{proofsec}
	  
\subsection{G4 Step 1}\label{rems}
	Our first task is to establish some facts about the Hurwitz orbits of length-two reflection factorizations.  These next remarks seek to answer the question of which reflections will show up and where in length-two factorizations.  In the following, we say that a reflection $t$ appears in an orbit if the orbit contains the factorization $(\cdot,t)$ where $\cdot$ represents some reflection.  Note that this occurs if and only if $(t,\cdot)$ is also in the orbit, so if a reflection appears, it can occupy either position in the factorization.\footnote{This follows in one direction from a single application of a Hurwitz move, and in the other direction by an application of the inverse Hurwitz move.}
	
	\begin{remark}\label{2facrem}
		A reflection factorization in G4 which consists of exactly one pair of elements from the same conjugacy class which are not equal has an orbit of size three in which exactly three of the four elements of one conjugacy class appear.\footnote{Incidentally, the Coxeter elements all fall into this category.  The conjugacy class of the reflections is the one whose eigenvalues are the squares of those of the factored Coxeter element.  Additionally, the reflection which does not appear in the orbit of the shortest factorization of a Coxeter element $c$ is $c^2$.}
	\end{remark}
	This may be easily observed through empirical analysis of the orbits.  
	
	For example, consider the Coxeter element $AB$.  The most obvious length-two factorization is $(A,B)$ which has orbit
	$$
	\{(A,B),\quad \big(B,B^2AB\big),\quad \big(B^2AB,A\big) \}
	$$
	in which the elements $A$, $B$, and $\5$, three of the four elements of $\calR_1$, all appear.
	
	Remark \ref{2facrem} takes into consideration all pairs of the same conjugacy class with the exception of $(t,t)$, which has an orbit with size one containing only itself.  As far as the pairs from different conjugacy classes, we have $(t,t^2)$, whose orbit is of size two, namely 
	$$
	\{(t,t^2),(t^2,t) \}.
	$$
	For the rest of the pairs from different conjugacy classes, we have the following.
	\begin{remark}\label{neqrem}
		A reflection factorization in G4 which consists of exactly a pair of elements $t_1\in\calR_1,t_2\in\calR_2$ which are not inverses of each other has an orbit of size four.  Four elements appear in this orbit, call them $t_1,\,t_2,\,t_1',$ and $t_2'$, with $t_1'\in\calR_1$, $t_2'\in\calR_2$.  Additionally,
		$$
		t_1^2\neq t_2',\quad t_1'^2\neq t_2,\quad t_1'^2\neq t_2'.
		$$
		Which is to say,
		$$
		\calR_1=\{t_1,t_1',t_2^2,t_2'^2\}
		$$
		and similarly for $\calR_2$.
	\end{remark}
	For example, consider the element $AB^2$.  This has an obvious factorization $(A,B^2)$ which has orbit:
	$$
	\{(A,\,B^2),~(B^2,\,\4\big),~(\4,\,\7),~(\7,\,A) \}.
	$$
	Note that $A,\,\4\in\calR_1$ and $B^2,\,\7\in\calR_2$.  Also note that none of these elements are inverses of each other.  In particular,
	\begin{align*}
		\calR_1&=\{A,\,\4\}\cup\{(B^2)^2,\,(\7)^2\}\\
		&=\{A,\,\4\}\cup\{B,\,\5\}
	\end{align*}
	and similarly,
	\begin{align*}
		\calR_2&=\{B^2,\,\7 \}\cup\{A^2,\,(\4)^2 \}\\
		&=\{B^2,\,\7 \}\cup\{A^2,\,A^2B^2A \}.
	\end{align*}
	
\subsection{G5 Step 1} 
	To begin with, Remark \ref{2facrem} holds in G5 since it operates within the G4 subgroups described in Remark \ref{g5sub}, as does the statement that the orbit of $(t,t)$ has size one, which is true in any group.  The rest of the remarks from Section \ref{rems} require some additional interpretation due to their being a more complex conjugacy class structure.  These remarks are drawn from empirical analysis of the group made using Sage software \cite{sage}.
	\begin{remark}\label{g5commrem}
		A reflection factorization in G5 which contains exactly a pair of non-equal elements from within a commuting set has a Hurwitz orbit of size two.
	\end{remark}
	This remark is true of any pair of elements which commute with each other in any group.
	\begin{remark}\label{g5squarerem}
		A reflection factorization which contains exactly a pair of non-commuting elements from square classes has a Hurwitz orbit of size four.  Specifically, suppose $t_1\in\calR_1$ and $t_3\in\calR_3$ with $t_1\neq t_3^2$.  Take $t_2\in\calR_1$ and $t_4\in\calR_3$ to appear in the orbit of $(t_1,t_3)$.  Then
		$$
		\calR_1=\{t_1,t_2,t_3^2,t_4^2\}.
		$$
	\end{remark}
	\begin{remark}\label{g5permrem}
		A reflection factorization in G5 which contains exactly a pair of non-commuting elements from adjacent classes has a Hurwitz orbit of size four.  Specifically, suppose $t_1\in\calR_1$ and $t_3\in\calR_2$ with $t_1\neq\widehat{t_3}$.  Take $t_2\in\calR_1$ and $t_4\in\calR_2$ to appear in the orbit of $(t_1,t_3)$.  Then
		$$
		\calR_1=\{t_1,t_2,\widehat{t_3},\widehat{t_4}\}.
		$$
	\end{remark}
	Incidentally, the shortest reflection factorizations of Coxeter elements are among these pairs.
	\begin{remark}\label{g5nonadjrem}
		A reflection factorization in G5 which contains exactly a pair of non-commuting elements from non-adjacent classes has a Hurwitz orbit of size six.
	\end{remark}

\subsection{G4 Step 2}\label{pairs}
	 This section is devoted to proving the following lemma.
	 \begin{lemma}\label{pairlem}
	 	Every reflection factorization with length at least three of a Coxeter element in G4 has, in its Hurwitz orbit, a reflection factorization which has a consecutive $(t,t)$ pair.
	 \end{lemma}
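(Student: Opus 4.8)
The plan is to use Proposition~\ref{sortingprop} to put the factorization into a convenient form and then work locally with two or three consecutive reflections, invoking the length-two orbit descriptions from Section~\ref{rems}. Since G4 has only the two reflection classes $\calR_1$ and $\calR_2$, any factorization of length $k\geq 3$ must, by the pigeonhole principle, contain at least two reflections lying in a common class. First I would apply Proposition~\ref{sortingprop} to move two such same-class reflections into adjacent positions, say positions $i$ and $i+1$, while also controlling the class of one chosen neighbor. If the adjacent pair is already equal we are done, so the substance of the proof is the case of a non-equal same-class pair: by Remark~\ref{2facrem} its product is a Coxeter element and its length-two orbit realizes exactly three of the four reflections of that class, the unique missing reflection being the square of that Coxeter element.

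The main mechanism I would use is the following. Because the orbit of the adjacent pair $(t_i,t_{i+1})$ sweeps position $i+1$ through three reflections of its class, if a same-class neighbor is available that is not the one missing reflection, I can Hurwitz-transport that neighbor into agreement and produce a consecutive $(t,t)$ pair. When at least three reflections of one class are present I can arrange, again by Proposition~\ref{sortingprop}, for all of positions $i,i+1,i+2$ to lie in that class; then the third reflection either already coincides with one of the three reflections appearing in the orbit of the leading pair---yielding a repeat immediately---or it equals the single excluded reflection, in which case I first perform a Hurwitz move on positions $i+1,i+2$ to change the product of the leading pair and hence shift which reflection is excluded, and retry. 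Since the class has only four reflections and the excluded one is pinned down by Remark~\ref{2facrem}, this dodging argument terminates.

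The remaining, and hardest, situation is when one is forced to use a neighbor from the opposite class, which is exactly what happens at the smallest length. A determinant count---an $\calR_1$ reflection has determinant $\omega$ and an $\calR_2$ reflection has determinant $\omega^2$---constrains the class multiset modulo three; for the running Coxeter element $AB$ this reads $a+2b\equiv 2 \pmod 3$ where $a,b$ count reflections from $\calR_1,\calR_2$, and for $k=3$ it forces the split $(a,b)=(1,2)$, so the same-class pair must be manipulated against an opposite-class neighbor. Here I would analyze the length-two orbit of the mixed adjacent pair using Remark~\ref{neqrem} together with the size-two commuting orbit of a pair $(t,t^2)$, repositioning reflections across the three positions until a repeat is exposed; this is precisely the content illustrated by the reduction diagram in Figure~\ref{tree}.

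The step I expect to be the main obstacle is this last case: verifying that in every residual mixed-class configuration the local length-three orbit really does contain a consecutive equal pair. Unlike the length-two orbits, a length-three window factors an arbitrary group element rather than a Coxeter element, so no single clean invariant forces the conclusion, and the argument reduces to a finite check across the possible class patterns of the window. The Step~1 remarks keep this bounded by pinning down the orbit structures exactly, and Figure~\ref{tree} shows the check is feasible for $AB$; turning that empirical picture into an airtight case analysis is the delicate heart of the lemma.
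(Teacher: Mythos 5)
Your overall strategy---sort with Proposition~\ref{sortingprop}, analyze length-two orbits via Remarks~\ref{2facrem} and \ref{neqrem}, constrain the class multiset by determinants, and pigeonhole---is the same as the paper's, but there are two genuine gaps. The larger one is exactly the case you flag as ``the delicate heart'': the length-three factorizations, where you leave the conclusion as ``a finite check across the possible class patterns'' and assert that ``no single clean invariant forces the conclusion.'' There is such an invariant, and it closes the case in two lines: the product is a Coxeter element and hence \emph{not a reflection}. Writing $(t_1,t_2,t_3)$ with $t_1\in\calR_2$ and $t_2,t_3\in\calR_1$, Remark~\ref{neqrem} says $\calR_1=\{t_2,t_2',t_1^2,t_1'^2\}$ where $t_2,t_2'$ (resp.\ $t_1,t_1'$) are the $\calR_1$ (resp.\ $\calR_2$) elements appearing in the orbit of $(t_1,t_2)$. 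If $t_3$ were $t_1^2$ or $t_1'^2$, you could Hurwitz-move its inverse adjacent to it and the two would cancel, forcing $c$ to equal a single reflection---impossible. Since $t_3\neq t_2$ (else you are done), $t_3=t_2'$, which appears in position two of the orbit of $(t_1,t_2)$, producing the desired pair. This is Proposition~\ref{3facprop} in the paper; your proposal never reaches it.

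The second gap is your ``dodging argument'' for three same-class reflections in consecutive positions: when the third reflection equals the excluded element of the leading pair's orbit, you apply $\sigma_{i+1}$ and ``retry,'' claiming termination because the class has only four elements. That is not a termination proof---nothing prevents the new third element from again being the newly excluded reflection. The paper sidesteps this entirely by counting appearances across two \emph{disjoint} adjacent pairs: with four same-class elements, the orbit of the middle pair $(t_2,t_3)$ realizes three of the four class members, so $t_1$ or $t_4$ must appear (Proposition~\ref{4facprop}); with three same-class elements next to an opposite-class element, Remark~\ref{neqrem} contributes two appearances and Remark~\ref{2facrem} three, for five appearances in a four-element class (Proposition~\ref{3facprop2}). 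Replacing your iterative retry with one of these static pigeonhole counts, and supplying the ``Coxeter elements are not reflections'' argument for length three, would complete the proof.
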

	Before attempting to prove this result, we first make a few observations about some of the shorter reflection factorizations of Coxeter elements.
	\begin{proposition}\label{3conrem}
		Length-three reflection factorizations of Coxeter elements in G4 contain elements from both conjugacy classes.  Reflection factorizations of Coxeter elements with length at least four have at least three elements from a single conjugacy class.
	\end{proposition}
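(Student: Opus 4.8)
The plan is to track a single numerical invariant of the factorization—the determinant—which is multiplicative under products and constant on conjugacy classes. First I would record the relevant determinants: every reflection in $\calR_1$ has eigenvalues $1$ and $\omega$ and hence determinant $\omega$; every reflection in $\calR_2$ has eigenvalues $1$ and $\omega^2$ and hence determinant $\omega^2$; and every Coxeter element has determinant $(-1)(-\omega)=\omega$ (for $\calC_1$) or $(-1)(-\omega^2)=\omega^2$ (for $\calC_2$). The crucial point is that a Coxeter element never has determinant $1$.

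Next, given a reflection factorization $(t_1,\ldots,t_k)$ of a Coxeter element $c$, let $a$ and $b$ count the factors lying in $\calR_1$ and in $\calR_2$ respectively, so that $a+b=k$. Since $\det$ is a homomorphism, taking determinants of both sides of $t_1\cdots t_k=c$ yields $\omega^{a+2b}=\det(c)$. Because $\det(c)\neq 1$, this forces $a+2b\not\equiv 0\pmod 3$, equivalently $a\not\equiv b\pmod 3$. This single congruence is the engine for both claims.

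For length three, $a+b=3$. If all three factors came from one class we would have $(a,b)=(3,0)$ or $(0,3)$, giving $a+2b\in\{3,6\}$, each divisible by $3$—impossible. Hence both conjugacy classes appear. For length at least four, suppose toward a contradiction that each class contributes at most two factors, so $a\le 2$ and $b\le 2$. Then $k=a+b\le 4$, which forces $k=4$ and $a=b=2$; but then $a+2b=6\equiv 0\pmod 3$, again contradicting the congruence. Therefore some class contributes at least three factors. For $k\ge 5$ the conclusion is already immediate from $a+b\ge 5$, so the determinant obstruction is needed only to exclude the balanced split $a=b=2$ when $k=4$.

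I expect essentially no obstacle here beyond bookkeeping: the only genuine point to verify is that Coxeter elements have determinant a primitive cube root of unity rather than $1$, which follows directly from their listed eigenvalues $-1,-\omega$ and $-1,-\omega^2$. I would also note that this argument applies uniformly to every Coxeter element, so no appeal to the Reiner–Ripoll–Stump reduction to a single $c$ is required for this proposition.
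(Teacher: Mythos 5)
Your proposal is correct and is essentially the paper's own argument: the paper likewise uses multiplicativity of the determinant, notes that $\calR_1$ and $\calR_2$ have determinants $\omega$ and $\omega^2$ while Coxeter elements never have determinant $1$, rules out the all-one-class length-three case and the balanced $2+2$ length-four case on those grounds, and handles length at least five by pigeonhole. Your write-up just makes the congruence $a+2b\not\equiv 0\pmod 3$ explicit where the paper leaves it implicit.
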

	\begin{proof}
		Recall that the product of the determinants of square matrices is the determinant of the product of those matrices.  Note that the determinants of elements from $\calR_1$ and $\calC_1$ are $\omega$ and those from $\calR_2$ and $\calC_2$ are $\omega^2$.  Observe that for length three, if all elements were from the same class, the product of their determinants would be 1, which is not the determinant of a Coxeter element.  Similarly for length four if there were two of each conjugacy class.  For those with length greater than four this is a clear application of the pigeonhole principle.
	\end{proof}
	We proceed by considering cases.  It is obvious that there are two possibilities for any reflection factorization--either it will contain only elements of one conjugacy class or it will contain elements of both.  We will prove that these two possible cases  will always have, in their Hurwitz orbit, a consecutive $(t,t)$ pair, or \textit{desired pair}.  We start with those which contain elements of both conjugacy classes.  For the sake of simplicity, we will consider length-three reflection factorizations separately.
	\begin{proposition}\label{3facprop}
		Every length-three reflection factorization of a Coxeter element in G4 has in its Hurwitz orbit a factorization with a desired pair.
	\end{proposition}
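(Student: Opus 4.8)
My plan is to first pin down the conjugacy-class pattern of a length-three factorization, then use the sorting principle to bring the repeated class into adjacent positions, and finally resolve the genuinely distinct case using the orbit data recorded in Step 1.

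First I would sharpen Proposition \ref{3conrem} with a determinant count. Since the reflections in $\calR_1$ have determinant $\omega$ and those in $\calR_2$ have determinant $\omega^2$, a factorization $(t_1,t_2,t_3)$ of a Coxeter element $c$ with $a$ factors from $\calR_1$ has determinant $\omega^{a}(\omega^2)^{3-a}=\omega^{6-a}$; matching this against $\det c\in\{\omega,\omega^2\}$ forces $a=2$ or $a=1$. Either way \emph{exactly one class is doubled}. Relabelling if necessary, I may assume the multiset of classes is $\{\calR_1,\calR_2,\calR_2\}$, with $\calR_2$ the doubled class. Applying Proposition \ref{sortingprop}, I then move the two $\calR_2$-reflections into adjacent positions, obtaining a factorization $(u_1,u_2,t)$ in the Hurwitz orbit with $u_1,u_2\in\calR_2$ and $t\in\calR_1$. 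If $u_1=u_2$ this already \emph{is} a desired pair and we are done.

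The main case is $u_1\neq u_2$, and here I would first record a small but essential observation: the adjacent mixed pair $(u_2,t)$ can never be an inverse pair. Indeed, if $u_2=t^2=t^{-1}$ then $u_1u_2t=u_1$, which is a reflection and not the Coxeter element $c$. Consequently Remark \ref{neqrem} (orbit of size four) governs the pair $(u_2,t)$, while Remark \ref{2facrem} (orbit of size three) governs the distinct same-class pair $(u_1,u_2)$; note that two elements of a common class are automatically non-inverse, since the inverse of a $\calR_2$-reflection lies in $\calR_1$. I would then alternate Hurwitz moves on the front same-class pair and on the back mixed pair, using these two remarks to track precisely which $\calR_2$-reflections can be realized in the first and second positions, and argue that the four-reflection set produced by Remark \ref{neqrem} and the three-reflection set produced by Remark \ref{2facrem} always overlap in a way that lets me bring one common $\calR_2$-reflection into the two adjacent slots, producing the desired pair.

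The step I expect to be the \textbf{main obstacle} is exactly this last coincidence argument. A single application of either remark generally will \emph{not} create the repeat, so the delicate part is the bookkeeping that shows the interaction between the order-three structures of $\calR_1$ and $\calR_2$ always forces a common reachable reflection in two adjacent positions. Since G4 is finite and, by Reiner–Ripoll–Stump, it suffices to treat a single Coxeter element such as $AB$, I would, should the purely structural argument become unwieldy, fall back on checking the finitely many factorizations of class type $\{\calR_1,\calR_2,\calR_2\}$ directly, as is in fact illustrated by the reduction tree of Figure \ref{tree}.
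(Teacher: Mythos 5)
Your setup is sound and coincides with the paper's: the determinant count sharpening Proposition~\ref{3conrem}, the use of Proposition~\ref{sortingprop} to bring the doubled class into adjacent positions, and the observation that the mixed pair cannot be an inverse pair (else $c$ would collapse to a reflection) are all the right moves. But the proposal is not a complete proof: the step you yourself flag as the main obstacle --- showing that the orbit of the same-class pair and the orbit of the mixed pair must produce a coincidence --- is exactly the content of the proposition, and ``alternate Hurwitz moves and track the bookkeeping'' is a plan, not an argument. The finite-check fallback would of course settle the matter, but then the structural machinery you set up does no work, and no alternation between Remark~\ref{2facrem} and Remark~\ref{neqrem} is needed in the first place.

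The gap closes with a \emph{single} application of Remark~\ref{neqrem}, and the same collapse-to-a-reflection trick you already used is what closes it. The paper's arrangement puts the singleton class first: write the factorization as $(t_1,t_2,t_3)$ with $t_1$ in the singleton class and $t_2,t_3$ in the doubled class. By Remark~\ref{neqrem}, the doubled class equals $\{t_2,\,t_1',\,t_1^2,\,t_2'^2\}$, where $t_2,t_1'$ appear in the orbit of $(t_1,t_2)$ and $t_1,t_2'$ are the two appearing elements of the other class; so either $t_3$ itself can be brought into position two, yielding the pair $(t_3,t_3)$, or else $t_3^2$ appears in position two, in which case the product collapses to a single reflection, contradicting that $c$ is a Coxeter element. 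Your own arrangement $(u_1,u_2,t)$ admits the identical finish: Remark~\ref{neqrem} applied to $(u_2,t)$ gives $\calR_2=\{u_2,v,t^2,w^2\}$ with $v,w$ the other two reflections appearing in that orbit; $u_1=t^2$ would force $c=t^{-1}u_2t$, a reflection, and $u_1=w^2$ would force $c=w^2(wu_2)=u_2$ (using that $(w,u_2)$ lies in the orbit, so $wu_2=u_2t$), also a reflection. Hence $u_1\in\{u_2,v\}$, and a Hurwitz move on positions two and three produces the desired pair with no interleaving and no case analysis on overlaps.
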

	\begin{proof}
		Consider a reflection factorization of the form $c=(t_1,t_2,t_3)$.  Recall that Proposition \ref{3conrem} gives that this factorization has elements from both conjugacy classes.  We may, without loss of generality, say that there are two elements from $\calR_1$ and one from $\calR_2$.  By Proposition \ref{sortingprop}, we may take $t_1\in\calR_2$ and $t_2,t_3\in\calR_1$.  Suppose the desired pair does not exist in this factorization.  Observe that if $t_1=t_2^2$, then $c=t_3$ which is to say that the Coxeter element is a reflection, which is not possible.  Thus, $t_1\neq t_2^2$.  Consider the orbit of $(t_1,t_2)$.  By Remark \ref{neqrem}, either $t_3$ or $t_3^2$ appears in this orbit.  By a similar logic, $t_3^2$ cannot possibly appear, so $t_3$ must.  Therefore, the factorization has in its Hurwitz orbit one which contains the desired consecutive $(t,t)$ pair.
	\end{proof}
	\begin{proposition}\label{3facprop2}
		Any reflection factorization with length at least four of a Coxeter element in G4 that contains elements from both conjugacy classes has in its Hurwitz orbit one with a desired pair.
	\end{proposition}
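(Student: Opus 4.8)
The plan is to put the factorization into a normal form using Proposition~\ref{3conrem} and Proposition~\ref{sortingprop}, and then to break the genuinely hard configurations using the order-three structure of the reflections. By Proposition~\ref{3conrem} at least three entries lie in a single conjugacy class, which I may take to be $\calR_1$; since both classes occur, at least one entry lies in $\calR_2$. Using Proposition~\ref{sortingprop} I would move three $\calR_1$-entries and one $\calR_2$-entry into the first four positions, obtaining a factorization beginning $(r_1,r_2,r_3,s,\ldots)$ with $r_1,r_2,r_3\in\calR_1$ and $s\in\calR_2$. Since all the moves below are among $\sigma_1,\sigma_2,\sigma_3$, the tail is never disturbed, so it suffices to produce a desired pair among these four positions. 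The easy case is when two of $r_1,r_2,r_3$ coincide: if they are adjacent we are done, and if $r_1=r_3$ then applying $\sigma_2$ to $(r_1,r_2,r_1)$ gives $(r_1,r_1,r_1^{-1}r_2r_1)$, a desired pair. So I may assume $r_1,r_2,r_3$ are three distinct elements of $\calR_1$.

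For a distinct triple, the size-three orbit structure of Remark~\ref{2facrem} disposes of the ``good'' cases with a single within-$\calR_1$ move: if the third reflection appearing in the orbit of $(r_1,r_2)$ is $r_3$ then $\sigma_1$ yields $(r_2,r_3,r_3)$, and if the third reflection of $(r_2,r_3)$ is $r_1$ then $\sigma_2^{-1}$ yields $(r_1,r_1,r_2)$. The real content of the proposition is the remaining ``bad'' triples, for which no sequence of moves confined to $\calR_1$ ever creates an adjacent repeat. I would make this obstruction explicit with the representative example of the twelve-element Hurwitz orbit of the length-three $\calR_1$-factorization $(A,B,A^2BA)$ of the central element $-I$: a direct check shows all twelve triples in that orbit are distinct-entry triples, so staying inside one conjugacy class cannot suffice. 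This is the main obstacle, and it is what forces the $\calR_2$-entry $s$ into the argument.

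The crux is that $\sigma_3^2$ sends $(r_3,s)$ to $(s^{-1}r_3s,\,s')$ with $s'\in\calR_2$; that is, it replaces $r_3$ by its conjugate $s^{-1}r_3s\in\calR_1$ while fixing $r_1,r_2$ — a transformation of the $\calR_1$-part that no move internal to $\calR_1$ can realize. Conjugation by $s$ is an order-three automorphism, and since $s$ commutes with $s^2$ it fixes $s^2\in\calR_1$ and cyclically permutes the other three elements of $\calR_1$. Because this $3$-cycle carries two distinct reflections one onto the other, I can conjugate an appropriate one of $r_1,r_2,r_3$ into another of them, producing a coincidence, after which one final $\sigma_2$ (as in the easy case) makes the repeated reflection adjacent. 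When $s^2$ is not among $r_1,r_2,r_3$, the three triple entries form a single $3$-cycle orbit of conjugation-by-$s$, so conjugating the entry beside $s$ always lands on another triple entry and succeeds immediately.

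I expect the fiddly part to be the bookkeeping in the complementary case, where $s^2$ equals one of $r_1,r_2,r_3$: then conjugation by $s$ fixes that entry and $3$-cycles the other two together with the missing $\calR_1$ element, so conjugating the entry beside $s$ may send it to the missing element rather than to a triple entry. The plan here is to first apply one within-$\calR_1$ move ($\sigma_1$ or $\sigma_2$) to reposition the correct entry into position $3$ beside the still-untouched $s$, and only then apply $\sigma_3^2$. Verifying that the correct entry can always be brought beside $s$ is a finite check governed by the size-three and size-four orbit descriptions of Remarks~\ref{2facrem} and~\ref{neqrem} together with the explicit $3$-cycle of conjugation by $s$; this is the one place where I would need to run through the possibilities rather than argue in one stroke.
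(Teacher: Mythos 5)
Your setup, your ``easy'' and ``good'' cases, and your Case A are all sound: the observation that conjugation by $s$ fixes $s^2$ and three-cycles the remaining three elements of $\calR_1$ is correct (it preserves the class, has order dividing $3$, fixes $s^2$, and cannot be trivial since $s$ is not central while $\calR_1$ generates the group), and when $s^2\notin\{r_1,r_2,r_3\}$ the move $\sigma_3^2$ really does carry $r_3$ onto $r_1$ or $r_2$. The genuine gap is Case B, where $s^2$ is one of the three $\calR_1$-entries --- and this is exactly the hard case, the analogue of the paper's preliminary step for inverse pairs. Your recipe there (``one within-$\calR_1$ move to bring the correct entry beside $s$, then $\sigma_3^2$'') fails as stated, because $\sigma_1$ and $\sigma_2$ do not permute the triple: they replace an entry by a conjugate, which in a ``bad'' triple is the \emph{fourth} element $x$ of $\calR_1$. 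Concretely, take $(s^2,v,u,s)$ with conjugation by $s$ acting as $u\mapsto x\mapsto v\mapsto u$. Then $\sigma_3^2$ alone gives the triple $\{s^2,v,x\}$ (no repeat); repositioning $v$ via $\sigma_2^{-1}$ gives $(s^2,x,v,s)$ and then $\sigma_3^2$ gives $\{s^2,x,u\}$ (no repeat); $\sigma_2$ then $\sigma_3^2$ gives $\{s^2,u,v\}$ (no repeat). What actually succeeds in this configuration is $\sigma_1$ followed by $\sigma_3^2$, producing $(v,x,x,s')$ --- i.e.\ one must bring the \emph{missing} element $x$ into position $2$ and conjugate $r_3$ onto it, not bring an original entry to position $3$. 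So the ``finite check'' you defer is not a verification of your stated plan but would require discovering different tactics case by case, and the subcase $r_3=s^2$ (where $(r_3,s)$ commute and $\sigma_3^2$ does nothing) is not addressed at all.

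The paper sidesteps all of this with a two-line pigeonhole that fits your setup verbatim. Arrange the entries so that the cross-class element is adjacent to exactly one $\calR_1$-entry while the other two $\calR_1$-entries are adjacent to each other, and make one preliminary move if the cross-class adjacent pair happens to be an inverse pair. Then Remark~\ref{neqrem} puts two elements of $\calR_1$ into the orbit of the cross-class pair and Remark~\ref{2facrem} puts three into the orbit of the same-class pair; since these two pairs occupy disjoint positions the moves do not interfere, and $2+3=5>4=|\calR_1|$ forces a common reflection that can be placed in adjacent positions simultaneously. Your conjugation-by-$s$ analysis is essentially a hands-on refinement of Remark~\ref{neqrem}; the missing idea is to apply it to two \emph{disjoint} adjacent pairs and count, rather than to chase individual elements through overlapping pairs.
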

	\begin{proof}
		Consider a reflection factorization of the form $(t_1,t_2,t_3,t_4,\ldots)$.  Recall that Proposition \ref{3conrem} gives that such a reflection factorization will have at least three elements of one conjugacy class.  We may, without loss of generality, take this class to be $\calR_1$.  By Proposition \ref{sortingprop}, we may take $t_1\in\calR_2$ and $t_2,t_3,t_4\in\calR_1$.  Suppose the desired pair does not exists in this factorization.  Suppose $t_1=t_2^2$.  Apply $\sigma_2$.  Since $t_2\neq t_3$ this implies $t_3^2\neq t_1$.  Thus, we may consider only the case $(t_1,t_2,t_3,t_4,\ldots)$ with no $(t,t)$ or $(t,t^2)$ pairs.  Remark \ref{neqrem} gives that two elements of $\calR_1$ will appear in the orbit of $(t_1,t_2)$ and Remark \ref{2facrem} gives that three will appear in the orbit of $(t_3,t_4)$.  By the pigeonhole principle, one of the elements of $\calR_1$ will appear twice, thus producing the desired consecutive $(t,t)$ pair.
	\end{proof}
	Finally, we wish to consider those reflection factorizations which have all elements from the same conjugacy class.  Recall that Proposition \ref{3conrem} tells us that length-three reflection factorizations of Coxeter elements do not fall under this category.  Since we are only concerned with reflection factorizations of Coxeter elements, we need not consider length-three factorizations of this type.
	\begin{proposition}\label{4facprop}
		Any reflection factorization in G4 which contains four elements of one conjugacy class has in its Hurwitz orbit a reflection factorization with a desired pair.
	\end{proposition}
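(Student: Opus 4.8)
The plan is to mirror the pigeonhole argument used in Proposition~\ref{3facprop2}, but now drawing both auxiliary pairs from a single conjugacy class rather than straddling two. First I would invoke Proposition~\ref{sortingprop} to move four elements of the distinguished conjugacy class---say $\calR_1$---into the first four positions, writing the factorization as $(t_1,t_2,t_3,t_4,\ldots)$ with $t_1,t_2,t_3,t_4\in\calR_1$. If any of the consecutive pairs $(t_1,t_2)$, $(t_2,t_3)$, or $(t_3,t_4)$ consists of two equal reflections, then a desired pair is already present and there is nothing to prove, so I would henceforth assume all three consecutive pairs are unequal.

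Next I would examine the orbits of the two disjoint length-two factorizations $(t_1,t_2)$ and $(t_3,t_4)$. Each is a pair of unequal reflections lying in the single class $\calR_1$, so Remark~\ref{2facrem} applies to each: the orbit of $(t_1,t_2)$ contains exactly three of the four reflections of $\calR_1$, and likewise for $(t_3,t_4)$. Since $|\calR_1|=4$ and $3+3-4=2$, the two sets of reflections appearing in these orbits must overlap; in particular there is a reflection $t\in\calR_1$ appearing in both.

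The final step is to realize this shared reflection as an adjacent $(t,t)$ pair. Using the moves $\sigma_1$ and their inverses, which act only on positions one and two and preserve the product, I would bring $(t_1,t_2)$ to the form $(\cdot,\,t)$; this is legitimate because, as noted at the start of Section~\ref{rems}, a reflection appearing in an orbit can occupy either slot. Independently, using $\sigma_3$ I would bring $(t_3,t_4)$ to the form $(t,\,\cdot)$. Because $\sigma_1$ and $\sigma_3$ act on disjoint positions and leave all remaining positions untouched, these two adjustments combine to produce a factorization $(\cdot,\,t,\,t,\,\cdot,\ldots)$ in the same Hurwitz orbit, exhibiting the desired consecutive pair at positions two and three.

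I expect the only delicate point to be this last step: arguing that the two orbit adjustments can be carried out simultaneously without interfering with one another. This is secured by the locality of the moves $\sigma_1$ and $\sigma_3$---they act on disjoint index pairs and hence commute, each altering only its own pair---together with the bidirectional appearance of $t$ (that $t$ in an orbit yields both $(\cdot,\,t)$ and $(t,\,\cdot)$). The pigeonhole count itself is immediate once Remark~\ref{2facrem} is applied to each pair, so the structure of the argument is entirely parallel to the mixed-class case, with the single-class bound $3+3>4$ replacing the earlier $2+3>4$.
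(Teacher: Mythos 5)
Your argument is correct. It uses the same key ingredients as the paper---Remark~\ref{2facrem}, a pigeonhole count, and the locality of braid moves on disjoint index pairs---but the decomposition differs. You run the orbit count on the two disjoint pairs $(t_1,t_2)$ and $(t_3,t_4)$ and intersect: each orbit exhibits three of the four reflections of $\calR_1$, so the orbits share a reflection $t$, which you then steer into positions two and three. The paper instead looks only at the single middle pair $(t_2,t_3)$: assuming no desired pair is present, its orbit contains three of the four class members and hence must contain $t_1$ or $t_4$, which immediately yields a consecutive equal pair against an untouched outer entry. The paper's version is shorter and needs only one application of Remark~\ref{2facrem}; yours is a direct transplant of the two-pair pigeonhole scheme from Proposition~\ref{3facprop2}, which buys uniformity with that earlier argument and avoids having to compare an orbit against fixed outer entries. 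The one point you flag as delicate---performing the two orbit adjustments simultaneously---is indeed harmless, since the moves generated by $\sigma_1$ and those generated by $\sigma_3$ are supported on disjoint positions and so do not interfere.
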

	\begin{proof}
		Consider a reflection factorization of a Coxeter element of the form $(t_1,t_2,t_3,t_4,\ldots)$ with $t_1,t_2,t_3,$ and $t_4$ all in the same conjugacy class.  Suppose the desired pair does not exist in this factorization.  Thus, these four elements are the four elements of a conjugacy class.  In this case, consider the orbit of $(t_2,t_3)$.  By Remark \ref{2facrem}, either $t_1$ or $t_4$ will appear in this orbit, giving us the desired consecutive $(t,t)$ pair.
	\end{proof}
	This result is a more generalized version than we need, but it suffices.  Since this covers all possible cases for longer reflection factorizations of Coxeter elements, we may are ready to prove the lemma.
	
	\begin{proof}[Proof of Lemma \ref{pairlem}]
		Follows from Propositions \ref{3conrem}, \ref{3facprop}, \ref{3facprop2}, and \ref{4facprop}. Proposition \ref{3conrem} presents possible cases for Coxeter elements of certain lengths and the rest of the propositions show that those lengths always contain in their Hurwitz orbits a desired pair.
	\end{proof}
	
\subsection{G5 Step 2}
	The proof of Lemma \ref{pairlem} uses facts which are not generally applicable to all reflection groups, so we must build up a similar lemma from scratch.  
	\begin{lemma}\label{g5pairlem}
		Every reflection factorization with length at least five of a Coxeter element in G5 has, in its Hurwitz orbit, a reflection factorization which has a consecutive $(t,t)$ pair.
	\end{lemma}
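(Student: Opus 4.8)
The plan is to follow the template of the G4 argument (Propositions~\ref{3conrem}--\ref{4facprop}), adapted to the richer four-class structure of G5. The key organizing fact is that a Hurwitz move never changes the multiset of conjugacy classes (Remark~\ref{multisetrem}), so the class-count vector $(n_1,n_2,n_3,n_4)$ recording how many factors lie in $\calR_1,\calR_2,\calR_3,\calR_4$ is a Hurwitz invariant. I would first prove a G5 analogue of Proposition~\ref{3conrem} by a determinant computation: because $\calR_1,\calR_2$ are adjacent classes and $\calR_1,\calR_3$ are square classes, every reflection in $\calR_1\cup\calR_2$ has determinant $\omega$ while every reflection in $\calR_3\cup\calR_4$ has determinant $\omega^2$; since determinant is multiplicative and $\det c$ is fixed, the counts must satisfy a congruence $n_1+n_2-n_3-n_4\equiv\text{const}\pmod 3$. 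This already rules out several multisets.

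Next I would exploit the two G4 subgroups of Remark~\ref{g5sub}, whose reflection sets are the square-class pairs $\calR_1\cup\calR_3$ and $\calR_2\cup\calR_4$. For length $k\ge 5$, pigeonhole forces at least three of the factors to lie in a single such subgroup; moreover a Coxeter element cannot lie in a G4 subgroup (its order is the Coxeter number $h=12$, which exceeds the order of every element of the order-$24$ group G4), so neither subgroup can swallow all the factors. A consecutive pair of factors drawn from one square-class pair lives entirely inside a copy of G4, so the length-two orbit data of G4 Step~1—Remark~\ref{2facrem} for a same-class pair and Remark~\ref{g5squarerem} for a square-class pair—apply to it verbatim.

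With these tools I would run the G4 mechanism inside the dominant subgroup. After using Proposition~\ref{sortingprop} to sort the class labels into convenient blocks, the cases split as in G4: when four factors fall in a single class I invoke Proposition~\ref{4facprop} directly; when a class is represented three times I reproduce the argument of Proposition~\ref{3facprop2}, pairing a same-class pair (which by Remark~\ref{2facrem} covers three of the four reflections of its class) against a square- or adjacent-class pair (which by Remarks~\ref{g5squarerem} and~\ref{g5permrem} covers a complementary two of those four), so that pigeonhole over the four-element class forces a factor to be Hurwitz-equivalent to a stationary same-class neighbor, producing the desired $(t,t)$ pair. Commuting pairs behave rigidly—their orbits have size two (Remark~\ref{g5commrem}), exactly like the $(t,t^2)$ pairs in G4—so, as in the proof of Proposition~\ref{3facprop2}, they must be broken up or routed around rather than used to create a repeat.

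The hard part will be the configurations with no clean G4 analogue: the thinly spread multisets such as $(1,1,2,1)$, where no single class is represented three times, and the interaction of \emph{non-adjacent} classes, whose length-two orbits have size six (Remark~\ref{g5nonadjrem}) and are the least explicitly described of the Step~1 remarks. For these I would chain the coverage remarks across the square, adjacent, and semi-square relations of Figure~\ref{G5graph}, relocating factors via their adjacent, square, and semi-square partners so as to enlarge the set of reflections a given class-member can be moved to, until pigeonhole over the four elements of that class forces two factors of the class to coincide and the G4 mechanism applies again. The delicate point throughout is that the coverage sets supplied by two different pairs must overlap in an \emph{actual} reflection, not merely in a conjugacy class, in order to manufacture a genuine equal pair; verifying this overlap in the spread-out and non-adjacent cases is where the bulk of the case analysis, and the reliance on the explicit computations behind the Step~1 remarks, will lie.
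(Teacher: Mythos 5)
Your core mechanism is the paper's: sort the class labels with Proposition~\ref{sortingprop}, arrange that no consecutive factors lie in a common commuting set, use the Step~1 orbit-coverage remarks (\ref{2facrem}, \ref{g5squarerem}, \ref{g5permrem}, \ref{g5nonadjrem}) to count how many elements of a fixed four-element conjugacy class appear among the relevant length-two orbits, and pigeonhole to force a repeated reflection. Your use of the G4 subgroups of Remark~\ref{g5sub} to import Proposition~\ref{4facprop} and Remark~\ref{2facrem} also matches Propositions~\ref{g5threetwo} and~\ref{g5twotwoone}. So for the multisets with a class represented at least three times, or with two classes each represented at least twice, your plan tracks the paper closely.

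The genuine gap is exactly the case you flag as ``the hard part.'' The paper's proof stands on Proposition~\ref{g5cases}, whose essential content is that the distribution with two factors in one class and one in each of the other three \emph{never occurs} for a length-five reflection factorization of a Coxeter element; this is established by direct computer verification, with pigeonhole taking over only from length six. Your determinant congruence cannot replace that fact: since adjacent classes share an eigenvalue, the congruence only constrains $n_1+n_2+2(n_3+n_4)\bmod 3$, and the four multisets of shape $(2,1,1,1)$ realize both nonzero residues (two multisets each), while the residue forced by $\det c$ is itself nonzero (it equals that of an adjacent pair). Hence two of the four spread-out multisets always survive your sieve. For those survivors, your proposed remedy---chaining coverage sets across the square, adjacent, and semi-square relations until pigeonhole forces two factors of a class to coincide---is not an argument: with only two factors in the dominant class, the available length-two orbits contribute too few elements of that class for pigeonhole over four elements to bite, and, as you yourself note, nothing guarantees the coverage sets meet in an actual reflection rather than merely in a class. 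You must either prove (as the paper does, empirically) that these multisets are not class profiles of Coxeter-element factorizations, or produce a new argument for them; as written, the proof is incomplete at precisely the point where G5 stops looking like G4.
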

	
	We now make observations concerning the Hurwitz orbits of reflection factorizations, particularly those in and around length five.
	\begin{proposition}\label{g5cases}
		Reflection factorizations of length at least five of Coxeter elements in G5 fall under one of the following categories.
		\begin{enumerate}
			\item At least four reflections of one conjugacy class.
			\item At least three reflections of one conjugacy class and two other reflections in any combination of conjugacy classes.
			\item At least two reflection factorizations from a pair of conjugacy class and one from a third.
		\end{enumerate}
	\end{proposition}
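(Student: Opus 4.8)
The plan is to read Proposition \ref{g5cases} as a purely combinatorial statement about the multiset of conjugacy classes of a factorization, pruned by an abelian invariant of $c$. Write $(a_1,a_2,a_3,a_4)$ for the number of reflections drawn from $\calR_1,\calR_2,\calR_3,\calR_4$, so that $a_1+a_2+a_3+a_4=k\ge 5$. The combinatorial core is a short pigeonhole argument; the real content is excluding the single shape that pigeonhole leaves behind.

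First I would set up the invariant. Using the presentation of G5 on the order-three generators $A,B$ subject to $ABAB=BABA$ (equivalently, a direct computation in Sage), abelianizing collapses the braid relation and yields $W^{\mathrm{ab}}\cong C_3\times C_3$, with the two factors generated by the images $\bar A,\bar B$. Because abelianization is constant on conjugacy classes, each $\calR_i$ has a single image; the four reflections $A,A^2,B,B^2$ map to the four distinct ``on-axis'' nonidentity elements with coordinates $(1,0),(2,0),(0,1),(0,2)$ modulo $3$, and since these images are distinct the four reflections represent the four distinct classes. A shortest factorization of $c$ is an adjacent pair of reflections (Remark \ref{g5permrem}); since adjacent classes carry the same determinant, the two adjacent pairs of classes are $\{(1,0),(0,1)\}$ and $\{(2,0),(0,2)\}$, so $[c]$ is either $(1,1)$ or $(2,2)$, in either case off both coordinate axes.

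Next I would run the pigeonhole split on the shape $(a_1,a_2,a_3,a_4)$ with total at least $5$. If some $a_i\ge 4$ we are in case (1). If $\max_i a_i=3$, then the remaining three entries sum to at least $2$, which is case (2). If every $a_i\le 2$, let $m$ be the number of entries equal to $2$: when $m\ge 2$ two classes supply two reflections each and, since the total exceeds $4$, a third class supplies at least one, giving case (3); when $m\le 1$ the total is at most $2+1+1+1=5$, forcing $m=1$, equality, and the shape $(2,1,1,1)$.

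The main obstacle is exactly this leftover shape $(2,1,1,1)$, and it is the reason a determinant count in the style of Proposition \ref{3conrem} is not enough: the determinant records only one residue modulo $3$ and is consistent with two of the four placements of the doubled class. Here the second coordinate of $W^{\mathrm{ab}}$ finishes the job. The four class images sum to $(1,0)+(2,0)+(0,1)+(0,2)=(0,0)$ modulo $3$, so a $(2,1,1,1)$ factorization has image equal to the image of its doubled class alone, namely one of $(1,0),(2,0),(0,1),(0,2)$ -- always on an axis. Since $[c]$ lies off both axes, no such factorization can have product $c$. This contradiction eliminates $(2,1,1,1)$, leaving cases (1)--(3) exhaustive and proving the proposition.
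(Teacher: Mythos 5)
Your argument is correct, and it takes a genuinely different route from the paper at the one point where the proposition has real content. The combinatorial reduction is the same in both: for total length at least five, the only shape of $(a_1,a_2,a_3,a_4)$ not already falling under categories (1)--(3) is $(2,1,1,1)$ up to permutation (and this shape can only occur at length exactly five, which is why the paper disposes of lengths six and up by pigeonhole alone). Where the paper excludes $(2,1,1,1)$ by exhaustive computer enumeration of the length-five Hurwitz orbits, you exclude it by a clean invariant: the abelianization $W^{\mathrm{ab}}\cong C_3\times C_3$, under which the four reflection classes map to the four axis points $(1,0),(2,0),(0,1),(0,2)$ summing to zero, so a $(2,1,1,1)$ factorization lands on an axis while a Coxeter element (being a product of one reflection from each of two adjacent classes) lands off both axes. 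This is exactly the refinement of the determinant argument of Proposition \ref{3conrem} that the single residue $\det$ cannot supply, and it replaces a computation with a proof that is checkable by hand. Two minor caveats, neither of which damages the argument: your identification of the adjacent pairings as $\{(1,0),(0,1)\}$ and $\{(2,0),(0,2)\}$ is a labeling convention that could come out as $\{(1,0),(0,2)\}$ and $\{(2,0),(0,1)\}$ instead, but as you note the image of $c$ has both coordinates nonzero either way; and you still inherit from the paper the (easily verified, essentially one-line) fact following Remark \ref{g5permrem} that shortest factorizations of Coxeter elements are adjacent pairs --- alternatively one can just compute the image of $AB$ directly as $(1,1)$. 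Your version is preferable to the paper's in that it removes the appeal to empirical analysis entirely.
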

	\begin{proof}
		For the length five factorizations, it may be observed via empirical analysis of the Hurwitz orbits that the final possible combination of conjugacy classes, i.e. two from one conjugacy class and one from each of the rest, does not occur for factorizations of Coxeter elements.  For length at least six, all of these are a consequence of the pigeonhole principle.
	\end{proof}
	Proposition \ref{g5cases} lays out the three cases for which we need to prove the presence of a desired pair in the Hurwitz orbit of a general reflection factorization of a Coxeter element.  We present these proofs in order.
	
	First, we observe that Proposition \ref{4facprop} is still true in G5, since it occurs within the G4 subgroups of G5 mentioned in Remark \ref{g5sub}.  
	\begin{proposition}\label{g5threetwo}
		Any reflection factorization in G5 which contains at least three reflections of a single conjugacy class $\calR_i$ and two reflections in any combination of conjugacy classes will have in its Hurwitz orbit a factorization with a desired pair of elements in $\calR_i$.
	\end{proposition}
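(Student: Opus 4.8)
The plan is to mirror the G4 argument of Proposition~\ref{3facprop2}: isolate two adjacent pairs whose Hurwitz orbits each make several elements of $\calR_i$ available at a shared boundary position, and then force a repetition by a pigeonhole count inside the four-element class $\calR_i$. Because Remarks~\ref{2facrem}, \ref{g5squarerem}, \ref{g5permrem}, and \ref{g5nonadjrem} are stated in terms of the \emph{type} of relationship between classes (same class, square, adjacent, non-adjacent), and every class stands in exactly one such relationship to each of the other three, I may take $\calR_i=\calR_1$ without loss of generality. Using Proposition~\ref{sortingprop} I would first sort the factorization so that three elements of $\calR_1$ occupy positions $1,2,3$ and one of the two remaining reflections occupies position $4$, with any further reflections parked at positions $5$ onward and never touched again.

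Next I would dispose of degeneracies. If two of the three $\calR_1$ elements coincide, then either they are already adjacent or a single move $\sigma_2$ conjugates position $3$ and brings a repeat into positions $1,2$, producing the desired pair immediately; so I may assume the three are distinct, hence three of the four reflections of $\calR_1$. Writing the relevant window as $(a,b,c,d,\dots)$ with $a,b,c\in\calR_1$ distinct, I want the pair $P_2=(c,d)$ at positions $(3,4)$ to be non-commuting, as required by Remarks~\ref{g5squarerem}, \ref{g5permrem}, and \ref{g5nonadjrem}. By the commuting-set structure each reflection commutes with exactly one reflection of $\calR_1$, so $d$ has at most one commuting partner among $a,b,c$; if $c$ is that partner I would apply $\sigma_2$ to replace $c$ by the conjugate $c^{-1}bc$, which is an element of $\calR_1$ distinct from $c$ and therefore not the commuting partner of $d$, while leaving positions $1,2$ a pair of distinct $\calR_1$ elements.

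With $P_1$ at positions $(1,2)$ and $P_2$ non-commuting at positions $(3,4)$, the two pairs can be manipulated independently, since $\sigma_1$ and $\sigma_3$ have disjoint support. Remark~\ref{2facrem} shows the orbit of $P_1$ makes three of the four elements of $\calR_1$ available at position $2$. For $P_2$ I would read off how many elements of $\calR_1$ become available at position $3$: three when $d\in\calR_1$ (Remark~\ref{2facrem}), two when $d$ lies in the square or adjacent class (Remarks~\ref{g5squarerem}, \ref{g5permrem}), and, in the non-adjacent case, at least two. Here Remark~\ref{g5nonadjrem} records only that the orbit has size six, so I would recover the count from the fact that Hurwitz moves preserve the multiset of conjugacy classes: every factorization in this orbit pairs one $\calR_1$ reflection with one non-adjacent-class reflection, each distinct $\calR_1$ element determines exactly one factorization with it in each of the two positions, and these two families are disjoint, so size six forces exactly three distinct elements of $\calR_1$. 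In every case at least two elements of $\calR_1$ are available at position $3$; since $3+2>4=|\calR_1|$, the sets available at positions $2$ and $3$ intersect, and placing a common element simultaneously at positions $2$ and $3$ yields the desired consecutive pair in $\calR_1$.

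I expect the non-adjacent case to be the main obstacle, chiefly because the governing remark gives only the orbit size and not the number of $\calR_1$ reflections appearing, so this count must be extracted from class-multiset invariance as above and cross-checked against the Sage data underlying Remark~\ref{g5nonadjrem}. The remaining delicate point is the uniform treatment of the commuting exceptions, handled by the preliminary conjugation; the reflections beyond position $4$ remain inert throughout.
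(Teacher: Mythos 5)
Your proof is correct and rests on the same engine as the paper's: sort the factorization, arrange two disjoint consecutive pairs so that the relevant orbit remarks apply, count the elements of $\calR_1$ made available by those orbits, and pigeonhole inside the four-element class. The difference is in which pairs you use. The paper sorts the block as (other, $\calR_1$, $\calR_1$, $\calR_1$, other) and works with the two cross-class boundary pairs, which forces a case split on the multiset ($j+k\geq 2$ versus $p\geq 1$) and uses both of the extra reflections; you sort as ($\calR_1$, $\calR_1$, $\calR_1$, other) and use one within-class pair (three of four elements available, by Remark~\ref{2facrem}) plus one cross-class pair (at least two available), which needs only one of the two extra reflections and collapses the case analysis to the type of the single element $d$. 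Your version also supplies something the paper's Case~2 asserts without justification: that a non-adjacent orbit of size six exhibits three distinct elements of $\calR_1$. Your derivation of that count from class-multiset invariance plus the fact that the fixed product lets each appearing reflection occupy each position exactly once is sound and is a genuine improvement over citing Remark~\ref{g5nonadjrem} alone. The one point to make explicit in a final write-up is the standing assumption that the only commuting pairs between distinct reflections are those inside commuting sets, which is what licenses both your uniqueness claim for the commuting partner of $d$ in $\calR_1$ and the applicability of Remarks~\ref{g5squarerem}--\ref{g5nonadjrem} after your corrective $\sigma_2$.
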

	\begin{proof}
		We further break this argument into four cases.  In each, we assume without loss of generality that $\calR_i=\calR_1$.  The first has to do with possible combinations of the adjacent and square sets while the second covers all possible combinations with the non-adjacent set.
		\begin{enumerate}
			\item[\textbf{Case 1}:] Suppose that the factorization has multiset of conjugacy classes $\{1^i,2^j,3^k,4^p\}$ with $i\geq3,j+k\geq2,p\geq0$.  Proposition \ref{sortingprop} allows us to choose a factorization $T=(t_1,t_2,t_3,t_4,t_5,\ldots)$ in the Hurwitz orbit such that $t_1,t_2\in\calR_2\cup\calR_3$ and $t_2,t_3,t_4\in\calR_1$.  From here, we must select a factorization $T'$ in the Hurwitz orbit in such a manner that no consecutive elements are in the same commuting set.\footnote{In future cases, we include this step in our selection of $T$.}  Suppose T does not have a repeated element of $\calR_1$.  Therefore the three elements of $\calR_1$ are in different commuting sets.  Suppose $t_1$ is in the same commuting set as $t_2$.  Then it cannot be in the same commuting set as $t_3$, so we may apply a single Hurwitz move to get $(t_1,t_3,t_2',t_4,t_5,\ldots)$ to get a factorization whose first two elements are in different commuting sets.  Suppose $t_5$ is in the same commuting set as $t_4$.  If it is also in the same commuting set as $t_2'$ then $t_2'=t_4$ in which case we're done.  Otherwise, we may perform a single Hurwitz move to get $T':=(t_1,t_3,t_4,t_2'',t_5)$.  The factorization $T'$ now has no relevant consecutive reflections from the same commuting set.  Consider the orbit of $(t_1,t_2)$.  Either by remark \ref{g5permrem} or \ref{g5squarerem}, whichever applies, an additional element of $\calR_1$ will appear in this orbit.  The same is true of the orbit of $(t_4'',t_5)$.  In total, this is five elements of $\calR_1$ appearing in the orbit.  By the pigeonhole principle, one must repeat.
			
			\item[\textbf{Case 2}:]  Suppose that the factorization has a multiset of conjugacy classes $\{1^i,2^j,3^k,4^p\}$ with $i\geq3,j\geq0,k\geq0,p\geq1$.  By Proposition \ref{sortingprop}, we may choose a factorization $T=(t_1,t_2,t_3,t_4,\ldots)$ in the Hurwitz orbit such that $t_1\in\calR_4$ and $t_2,t_3,t_4\in\calR_1$.  Similarly to above, we may do this in a manner such that no consecutive elements are part of the same commuting sets, supposing no elements of $\calR_1$ repeat in $T$.  Consider the orbit of $(t_1,t_2)$.  By Remark \ref{g5nonadjrem}, this orbit will produce two additional elements of $\calR_1$.  In total, this is five elements of $\calR_1$ appearing in the orbit.  By the pigeonhole principle, one must repeat.
		\end{enumerate}
		This covers all possible cases for reflection factorizations of length at least five of Coxeter elements in G5 of the given description, concluding the proof. 
\end{proof}
	\begin{proposition}\label{g5twotwoone}
		Any reflection factorization in G5 which contains at least two reflections from the conjugacy class $\calR_i$, one from its non-adjacent set, and one additional element will have in its Hurwitz orbit a reflection factorization with a desired pair of elements in $\calR_i$.
	\end{proposition}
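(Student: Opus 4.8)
The plan is to follow the template of Proposition~\ref{g5threetwo}: assume toward a contradiction that no factorization in the Hurwitz orbit has a consecutive $(t,t)$ pair, and then count the distinct reflections of $\calR_i$ that are forced to appear among the governed positions. Since $\calR_i$ contains only four reflections, producing five appearances will force a repeat, and that repeat can be slid into consecutive position to give the desired pair. Throughout I take $\calR_i=\calR_1$ without loss of generality, so that the non-adjacent class present is $\calR_4$, and I note that the no-desired-pair assumption forces the two reflections of $\calR_1$ already in the factorization to be distinct (otherwise we are done).

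First I would use Proposition~\ref{sortingprop} to choose a factorization $T=(t_1,t_2,t_3,\ldots)$ in the orbit with $t_1\in\calR_4$, $t_2\in\calR_1$, and the second $\calR_1$ reflection together with the additional element placed in adjacent positions further along. As in Case~1 of Proposition~\ref{g5threetwo}, before invoking any orbit remark I must arrange by a bounded number of preliminary Hurwitz moves that no two consecutive reflections entering a pairing lie in a common commuting set; this is exactly the condition under which the length-two orbit remarks produce large orbits, since a commuting pair has an orbit of size only two by Remark~\ref{g5commrem} and contributes no new reflection. If at any stage such a move is blocked because the two reflections coincide, then they already form the desired pair.

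With the non-commuting configuration in hand, I would apply Remark~\ref{g5nonadjrem} to the non-adjacent pair $(t_1,t_2)$: its orbit has size six and, as recorded in the proof of Proposition~\ref{g5threetwo}, contributes two reflections of $\calR_1$ beyond $t_2$, for three distinct appearances. The remaining reflection of $\calR_1$ gives a fourth appearance, and pairing it with the additional element---which lies in $\calR_2$ or $\calR_3$ since a third distinct class is present---yields one further reflection of $\calR_1$ by Remark~\ref{g5permrem} or Remark~\ref{g5squarerem}, a fifth appearance. Five appearances among the four reflections of $\calR_1$ force, by the pigeonhole principle, some reflection of $\calR_1$ to occur at two of the governed positions; Proposition~\ref{sortingprop} (or a direct sequence of Hurwitz moves) then brings these two equal reflections together, producing the desired pair. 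The same count goes through with room to spare if the additional element happens to lie in $\calR_4$, since then the second $\calR_1$ reflection can be paired into a second non-adjacent orbit contributing two more appearances.

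The step I expect to be the main obstacle is not the appearance count but making the pigeonhole conclusion effective: I must verify that two equal reflections of $\calR_1$, once forced to exist, genuinely sit (or can be Hurwitz-transported) into consecutive positions rather than merely appearing at unrelated positions in different members of the orbit. This requires tracking which position each counted appearance occupies and checking that the orbit moves realizing the second and third $\calR_1$ reflections do not disturb the positions holding the fourth and fifth, so that the repeated value can be slid next to its twin. The commuting-set bookkeeping of the preliminary step feeds directly into this, since a pairing that collapses to a commuting pair would break the independence of the two counting regions; handling the few configurations where these regions overlap is the delicate part of the argument.
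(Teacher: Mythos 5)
Your proposal follows essentially the same route as the paper's proof: split on the conjugacy class of the additional element, arrange the factorization by Proposition~\ref{sortingprop} so that no paired consecutive reflections share a commuting set, count appearances of $\calR_1$ via Remarks~\ref{g5nonadjrem}, \ref{g5permrem}, and \ref{g5squarerem} (five appearances when the extra element is in $\calR_2\cup\calR_3$, six when it is in $\calR_4$), and apply the pigeonhole principle, with the disjointness of the two paired positions guaranteeing the repeat can be made consecutive. The only sub-case you omit is the one where the additional element lies in $\calR_1$ itself, which the paper dispatches by reducing to Case~2 of Proposition~\ref{g5threetwo} and which is already covered by the hypothesis of that proposition.
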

	\begin{proof}
		We further break this argument into cases.  In each, we assume without loss of generality that $\calR_i=\calR_1$.  There are four cases, one for each of the possible sets for the other element.
		\begin{enumerate}
			\item[\textbf{Case 1}:] We may, without loss of generality, suppose that the factorization has multiset of conjugacy classes $\{1^i,2^j,3^k,4^p\}$ with $i\geq2,j\geq0,k\geq0,p\geq2$.  Proposition \ref{sortingprop} allows us to choose a factorization $T=(t_1,t_2,t_3,t_4,\ldots)$ in the Hurwitz orbit such that $t_1,t_4\in\calR_4$ and $t_2,t_3\in\calR_1$.  From here, we must select a factorization $T'$ in the Hurwitz orbit in such a manner that no consecutive elements are in the same commuting set.  Suppose no elements of $\calR_1$ repeat.  Therefore the elements of $\calR_1$ must be in different commuting sets.  Suppose $t_1$ is in the same commuting set as $t_2$.  Then it is not in the same commuting set as $t_3$, so we apply a single Hurwitz move to get $(t_1,t_3,t_2',t_4,\ldots)$.  Suppose that $t_4$ is in the same commuting set as $t_2'$.  By Remark \ref{2facrem}, applying a single Hurwitz move gives $T':=(t_1,t_2',t_2,t_4,\ldots)$.  Since $t_2\neq t_2'$ and $t_2\neq t_3$ it must be that $T'$ has no relevant adjacent reflections in the same commuting sets.  Consider the orbit of $(t_1,t_2')$.  By Remark \ref{g5nonadjrem}, two additional elements of $\calR_1$ will appear in this orbit.  The same is true of the orbit of $(t_2,t_4)$.  In total, this is six elements of $\calR_1$ appearing in the orbit.  By the pigeonhole principle, one must repeat.			
			\item[\textbf{Case 2}:] Suppose that the factorization has multiset of conjugacy classes $\{1^i,2^j,3^k,4^p\}$ with $i\geq2,j+k\geq1,p\geq1$.  By Proposition \ref{sortingprop},  we may choose a factorization $T=(t_1,t_2,t_3,t_4,\ldots)$ in the Hurwitz orbit such that $t_1\in\calR_4$, $t_2,t_3\in\calR_1$, and $t_4\in\calR_2\cup\calR_3$.  Similarly to above, we may do this in such a manner that no consecutive elements are in the same commuting set.  Suppose that no elements of $\calR_1$ repeat.  Consider the orbit of $(t_1,t_2)$.  By Remark \ref{g5nonadjrem}, two additional elements of $\calR_1$ will appear in this orbit.  Now consider the orbit of $(t_3,t_4)$.  By Remark \ref{g5permrem} or \ref{g5squarerem}, whichever applies, an additional element of $\calR_1$ will appear in this orbit.  In total, this is five elements of $\calR_1$ appearing in the orbit.  By the pigeonhole principle, one must repeat.
			\item[\textbf{Case 3}:] Suppose that the factorization has multiset of conjugacy classes $\{1^i,2^j,3^k,4^p\}$ with $i\geq3,j\geq0,k\geq0,p\geq1$.  The proof is identical to that of Case 2 of the proof of Proposition \ref{g5threetwo}.
		\end{enumerate}
		This covers all possible cases for reflection factorizations of length at least five of Coxeter elements in G5 of the given description, concluding the proof.   
	\end{proof}
	With these propositions completed, we are ready to prove the lemma.
	\begin{proof}[Proof of Lemma \ref{g5pairlem}]
		Proposition \ref{g5cases} lays out the three possible cases for the length-five reflection factorizations of Coxeter elements. We observe that Proposition \ref{4facprop} holds in G5 since it operates within a G4 subgroup mentioned in Remark \ref{g5sub}, which proves Case 1.  Case 2 is proven by Proposition \ref{g5threetwo}.  Proposition \ref{g5twotwoone} is a refinement of Case 3 since the case of two reflections each from a pair of conjugacy classes falls within the case of two reflections from one class and a third from another.  Since this covers all possible cases, this concludes the proof. 
	\end{proof}

\subsection{G4 Step 3}
	Lemma \ref{pairlem} applies to reflection factorizations of length at least three of Coxeter elements.  Since G4 has rank two, the smallest possible reflection factorizations of Coxeter elements have length two, so those are the only base case to account for.  Recall Bessis' Theorem \ref{thm:Bessis}  proves Theorem \ref{transthm} for length-two reflection factorizations of rank two groups,so it may be used as a base case for length two in both G4 and G5.	
\subsection{G5 Step 3}
	Since Lemma \ref{g5pairlem} only applies to those reflection factorizations of length at least five of Coxeter elements, we must provide a proof of additional bases cases, namely length-three and length-four reflection factorizations of Coxeter elements.  To accomplish this, we must rely on some empirical data.
	\begin{proposition}\label{g5induction}
		Theorem \ref{transthm} holds in G5 for reflection factorizations of length less than or equal to four of Coxeter elements.
	\end{proposition}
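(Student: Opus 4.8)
The plan is to prove the two directions separately, with the forward (``only if'') direction being immediate and essentially all of the work concentrated in the backward (``if'') direction. If two factorizations lie in the same Hurwitz orbit then by Remark \ref{multisetrem} they share the same multiset of conjugacy classes, so that direction needs no further argument. For the converse I would first invoke the results of Reiner, Ripoll, and Stump \cite{RRS} to fix a single Coxeter element $c$ of G5, since transitivity for one Coxeter element transfers to all of them. The length-two case is then already handled: G5 has rank two, so the shortest reflection factorizations of $c$ have length two, and Bessis' Theorem \ref{thm:Bessis} asserts that these form a single Hurwitz orbit---hence they also carry a single multiset of conjugacy classes, and the equivalence holds vacuously at length two. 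This leaves lengths three and four.

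Before enumerating, I would cut down the number of multisets that can actually occur using a determinant constraint, in the spirit of the argument used for G4. Every reflection of G5 has order three, so its determinant is a primitive cube root of unity, and the conjugacy-class structure of Figure \ref{G5graph} forces the determinant to depend only on the class: the adjacent classes $\calR_1,\calR_2$ carry one value and $\calR_3,\calR_4$ the other. Since the determinant of a product is the product of the determinants and must equal $\det(c)$, the number of factors drawn from $\calR_1\cup\calR_2$ versus $\calR_3\cup\calR_4$ is pinned down modulo three. This rules out most a priori multisets and leaves only a short list to examine at each of lengths three and four.

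For each surviving multiset of conjugacy classes I would then enumerate every reflection factorization of $c$ of that length---a finite search, since $c$ is fixed and $|\calR|=16$---partition these factorizations by multiset, and compute the Hurwitz orbits directly with Sage \cite{sage}, exactly as the Step 1 remarks for G5 were obtained. The claim then reduces to verifying, multiset by multiset, that all factorizations sharing a given multiset lie in a single orbit. Proposition \ref{sortingprop} helps trim the bookkeeping, since it already collapses factorizations that differ only by a permutation of their classes.

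The main obstacle here is not mathematical depth but the completeness and trustworthiness of the verification: I must be sure that the enumeration of multisets surviving the determinant reduction is exhaustive and that the orbit computation correctly certifies each multiset as a single orbit. It is worth noting why this computation cannot simply be replaced by the reduction used elsewhere in the paper: many length-three and length-four factorizations of $c$ admit no consecutive $(t,t)$ pair anywhere in their Hurwitz orbit---which is precisely why Lemma \ref{g5pairlem} is stated only for length at least five---so these short cases genuinely fall outside the inductive machinery and must be settled on their own.
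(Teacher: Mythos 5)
Your proposal is correct and matches the paper's approach: the paper's entire proof of Proposition~\ref{g5induction} is an exhaustive empirical verification of the Hurwitz orbits using Sage, which is exactly the core of your argument. Your additional framing (the determinant constraint on multisets and the observation that Lemma~\ref{g5pairlem} cannot cover these lengths) is sound and more detailed than what the paper records, but it does not change the method.
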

	\begin{proof}
		This fact may be observed through empirical analysis of the Hurwitz orbits of reflection factorizations of Coxeter elements.  This was done exhaustively using Sage software \cite{sage}.
	\end{proof}
		
\subsection{G4 and G5 Step 4}\label{redux}
	Our goal is to be able to say that if we have a consecutive pair of the form $(t,t)$, we may treat it as though it was the element $t^2$ for the purpose of Hurwitz moves and then split it back into a pair $(t',t')$ at the end.
	
	We begin by defining some structures.
	\begin{definition}\label{marked}
		A \emph{marked element} is a reflection $t$ in a reflection factorization which has been marked, denoted $t^*$.  A \emph{marked factorization}, denoted $T'$, is a reflection factorization which contains a marked element.  The \emph{underlying factorization} $\hat{T}$ is the factorization $T'$ where the marked element $t^*$ is replaced by $t$.
	\end{definition}
	We now explore the functionality of this notation.  To begin with, we would like to perform Hurwitz moves with this marked element, so we need to define Hurwitz moves on marked factorizations.
	\begin{definition}\label{Hurdef}
		The \emph{marked Hurwitz move} $\sigma^*$ is defined as follows for marked reflection factorizations:
		\begin{enumerate}
			\item $(t_1,\,t_2)\xrightarrow{\sigma_1^*}(t_2,\quad t_2^2\cdot t_1\cdot t_2)$
			\item $(t_1,\,t_2^*)\xrightarrow{\sigma_1^*}(t_2^*,\quad t_2^2\cdot t_1\cdot t_2)$
			\item $(t_1^*,\,t_2)\xrightarrow{\sigma_1^*}\big(t_2,\quad (t_2^2\cdot t_1\cdot t_2)^*\big)$
		\end{enumerate}
	\end{definition}
	We now have an interpretation for performing Hurwitz moves with marked elements.  It should be noted that this interpretation is identical to a normal Hurwitz move on the underlying factorization with the additional fact of shifting the position of the mark $^*$.  As such, this definition will preserve all the established facts regarding Hurwitz orbits.\footnote{It can also be shown that this marked move still gives rise to a group action on the braid group, as above.}
	
	As noted, the marked Hurwitz move shifts the position of the marked element.  This raises the question, how relevant is the position of the marked element and can it be changed without altering the elements around it.
	
	Note that the conjugacy class of the marked element never changes.  Thus, there is some limitation to which elements in an orbit can be marked--only those elements which are of a set conjugacy class.  The goal is therefore to show this is the only restriction.
	\begin{proposition}\label{repositionrem}
		Suppose $t_1$ and $t_2$ in the same conjugacy class.  Then $$(t_1,t_2^*)\xrightarrow{(\sigma_1^*)^3}(t_1^*,t_2).$$
	\end{proposition}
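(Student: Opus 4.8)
The plan is to reduce the statement to the ordinary Hurwitz action on the underlying factorization, with the mark contributing only elementary bookkeeping. As observed just after Definition~\ref{Hurdef}, each marked move $\sigma_1^*$ acts on the underlying factorization $\hat{T}$ exactly as the ordinary move $\sigma_1$ does, the mark merely traveling with its reflection. Hence $(\sigma_1^*)^3$ applied to $(t_1,t_2^*)$ realizes, on underlying factorizations, precisely $\sigma_1^3(t_1,t_2)$, and it remains to show that this returns the underlying factorization to $(t_1,t_2)$ and then to locate the mark.

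First I would settle the underlying factorization using the orbit-size results of Step~1. Because $t_1$ and $t_2$ lie in the same conjugacy class, $(t_1,t_2)$ is either a pair $(t,t)$, whose Hurwitz orbit has size one, or a pair of distinct same-class reflections, whose orbit has size three by Remark~\ref{2facrem}; in G5 the latter still holds since a same-class pair lies inside one of the G4 subgroups of Remark~\ref{g5sub}. In both cases $\sigma_1$ permutes its orbit cyclically, and since the orbit has size one or three, $\sigma_1^3$ acts as the identity on it. Thus $\sigma_1^3(t_1,t_2)=(t_1,t_2)$, so after three marked moves the underlying factorization again reads $(t_1,t_2)$, with $t_1$ in the first slot and $t_2$ in the second.

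Next I would track the mark through the three rules of Definition~\ref{Hurdef}. Each application of $\sigma_1^*$ transposes the two entries and so carries the marked reflection from whichever slot it occupies to the other; starting in the second slot, the mark therefore sits in slots $1,2,1$ after the first, second, and third moves. After three moves it rests in the first slot, which by the previous paragraph holds $t_1$, while the second slot holds $t_2$. This yields exactly $(t_1^*,t_2)$, as claimed.

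The step demanding the most care is showing $\sigma_1^3=\mathrm{id}$ on the underlying factorization; everything else is bookkeeping. This is precisely where the same-conjugacy-class hypothesis enters, as it is what restricts the orbit to size one or three and thereby forces the (otherwise non-obvious) braid relation $t_1t_2t_1=t_2t_1t_2$ on such a pair, guaranteeing in particular that the marked reflection brought back to the first slot really is $t_1$. Were $(t_1,t_2)$ instead a pair like $(t,t^2)$ or a non-inverse pair from different classes, the orbit would have size two or four and $\sigma_1^3$ would fail to be the identity; ruling these out via the hypothesis is the crux of the argument.
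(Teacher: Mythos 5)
Your proof is correct and follows essentially the same route as the paper's: use Remark~\ref{2facrem} to see that $\sigma_1^3$ is the identity on the underlying same-class pair, and track the mark alternating slots under the rules of Definition~\ref{Hurdef}. You are in fact slightly more careful than the paper, which cites only the size-three orbit of a non-equal pair and does not explicitly address the $(t,t)$ case or note that the same-class orbit structure persists in G5 via Remark~\ref{g5sub}.
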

	\begin{proof}
		Properties two and three of definition \ref{Hurdef} show that an application of $(\sigma_1^*)^3$ will have the effect of shifting the marked element to the left.  Furthermore, Remark \ref{2facrem} gives that this pair has an orbit of size three, meaning that $(\sigma^*)^{3}$ has no other effect on the underlying factorization.
	\end{proof}
	\begin{proposition}\label{reposprop}
		Two marked factorizations in either G4 or G5 that have the same underlying factorization are in the same Hurwitz orbit if the marked elements are of the same conjugacy class.
	\end{proposition}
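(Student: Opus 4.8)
The plan is to connect the two marked factorizations by conjugating a mark-transfer move by a braid that first brings the two marked positions together. Write $\hat{T}=(s_1,\ldots,s_k)$ for the common underlying factorization, and suppose the mark of the first factorization sits at position $p$ and the mark of the second at position $q$, so that $s_p$ and $s_q$ lie in the same conjugacy class, say $\calR_1$. The essential observation I would rely on is that the marked Hurwitz moves of Definition \ref{Hurdef} act on the underlying factorization exactly as ordinary Hurwitz moves while merely tracking the position of the mark; consequently any sequence of moves we run on $\hat{T}$ can be run on the marked factorization, and by the invertibility of the braid action every such sequence can be undone.

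First I would choose a sequence $M$ of marked Hurwitz moves whose effect on the underlying factorization is to bring $s_p$ and $s_q$ to adjacent positions, carrying the mark (which rides on the image of $s_p$) along. Because Hurwitz moves only conjugate reflections, the images of $s_p$ and $s_q$ after $M$ remain in $\calR_1$; call the resulting adjacent pair $(\tilde s_p,\tilde s_q)$, with the mark on $\tilde s_p$. Next I would transfer the mark from $\tilde s_p$ to $\tilde s_q$ without disturbing the underlying factorization. If $\tilde s_p\neq\tilde s_q$, this is exactly Proposition \ref{repositionrem}: the pair has orbit size three by Remark \ref{2facrem}, so $(\sigma^*)^3$ fixes the underlying pair and slides the mark from one entry to the other. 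If instead $\tilde s_p=\tilde s_q=t$, a single marked move suffices, since property (3) of Definition \ref{Hurdef} gives $(t^*,t)\mapsto(t,t^*)$ because $t^2\cdot t\cdot t=t^4=t$. In either case the underlying factorization is unchanged and the mark now sits on $\tilde s_q$.

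Finally I would apply $M^{-1}$. This returns the underlying factorization to $\hat{T}$, and since the mark is tracked through every move and now rides on $\tilde s_q$, it is carried back to the original location of $s_q$, namely position $q$. The result is precisely the second marked factorization, so the two lie in a common Hurwitz orbit. The same argument applies verbatim in G5, since Remark \ref{2facrem}, and hence Proposition \ref{repositionrem}, holds there as well.

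The step I expect to require the most care is the transfer step together with the bookkeeping ensuring that $M^{-1}$ restores $\hat{T}$ with the mark relocated. The delicate point is that the mark-transfer must leave the underlying factorization \emph{pointwise} fixed, not merely fixed within its Hurwitz orbit: only then does conjugating by $M^{-1}$ land back on $\hat{T}$ itself rather than on some other factorization in its orbit. This is why it matters that in the unequal case the orbit-size-three fact returns the pair $(\tilde s_p,\tilde s_q)$ to itself in the same order, and in the equal case a single move leaves $(t,t)$ untouched.
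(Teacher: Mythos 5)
Your proof is correct and takes essentially the same approach as the paper: both arguments reduce everything to Proposition~\ref{repositionrem}, using auxiliary Hurwitz moves to make the two relevant same-class entries adjacent and then sliding the mark across with $(\sigma^*)^3$. The only differences are cosmetic --- the paper sorts the whole conjugacy class into a contiguous block via Proposition~\ref{sortingprop} and walks the mark through it one adjacent pair at a time, whereas you conjugate a single transfer by a braid $M$ and undo it; your explicit treatment of the case $\tilde s_p=\tilde s_q$ is a welcome extra bit of care that the paper glosses over.
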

	\begin{proof}
		Consider two marked factorizations which have the same underlying factorization.  Assume these marked elements are of the same conjugacy class, say $\calR_2$.  By Proposition \ref{sortingprop} we may select these two factorizations as $(\ldots,t_q^*,\ldots,t_i,t_{i+1},\ldots,t_m)$ and $(\ldots,t_p^*,\ldots,t_i,t_{i+1},\ldots,t_m)$ such that $t_j\in\calR_2$ if and only if $j\in[1,i]$ and $q\geq p$.  By Proposition \ref{repositionrem}, we have the following:
		\begin{align*}
			(\ldots,t_p, t_{p + 1}, \ldots, t_{q - 2}, t_{q - 1}, t_q^*,\ldots)&\xrightarrow{(\sigma_{q-1}^*)^{3}}(\ldots,t_p, t_{p + 1}, \ldots, t_{q - 2}, t_{q - 1}^*, t_q,\ldots)\\
			&\xrightarrow{(\sigma_{q-2}^*)^{3}} (\ldots,t_p, t_{p + 1}, \ldots, t_{q - 2}^*, t_{q - 1}, t_q,\ldots)\\
			&\quad\vdots\\
			&\xrightarrow{(\sigma_{p}^*)^{3}}(\ldots,t_p^*,t_{p+1}, \ldots,t_q,\ldots).
		\end{align*}
		Therefore, the factorizations are in the same Hurwitz orbit, as desired.
	\end{proof}

	With this information, we may define a new object which we will use to create a method for induction.
	\begin{definition}\label{duple}
		Let $T=(\ldots,t^2,t^2,\ldots)$ be a reflection factorization.  Define the \emph{encoded factorization} $T^*$ to be the pair $(\hat{T},\calR_i)$ where $\hat{T}$ is the underlying factorization of a marked factorization $T'$ where $t^*$ replaces $(t^2,t^2)$ and $\calR_i$ is the conjugacy class of $t$. 
	\end{definition}
  
	Our goal is to use this object to be able to treat a length-$m$ reflection factorization of a Coxeter element as one of length $(m-1)$ for the purpose of Hurwitz moves.  Note that $T$ and $T'$ have this relation, however $T'$ uses marked Hurwitz moves.  Thus, we must find some way to reconcile marked and unmarked Hurwitz moves.
	\begin{proposition}\label{hurprop}
		Let $T_1$ be a reflection factorization of the form $(\ldots,t^2,\,t^2,\ldots)$ and $T'_1$ be a marked factorization where $t^*$ replaces the consecutive $(t^2,t^2)$ pair in $T_1$.  If there exists a series of marked Hurwitz moves which may be performed on $T'_1$ to reach a marked reflection factorization $T'_2$ where $t'^*$ replaces a consecutive pair $(t'^2,t'^2)$ in a reflection factorization $T_2$, then there exists a series of Hurwitz moves which can be performed on $T_1$ to reach $T_2$.
	\end{proposition}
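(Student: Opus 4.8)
The plan is to set up a \emph{simulation} in which each marked Hurwitz move on the shorter marked factorization is realized by one or two ordinary Hurwitz moves on the longer factorization obtained by ``unfolding'' the marked element. Concretely, to any marked factorization $T'$ with marked element $t^*$ I associate the ordinary factorization $\mathrm{unfold}(T')$ in which $t^*$ is replaced by the consecutive pair $(t^2,t^2)$. Since every reflection here has order three, $t^2\cdot t^2=t^4=t$, so $\mathrm{unfold}(T')$ has the same product as the underlying factorization $\hat T$ and is exactly one entry longer. By the hypotheses of the proposition, $\mathrm{unfold}(T_1')=T_1$ and $\mathrm{unfold}(T_2')=T_2$, so it suffices to show that whenever $T'\xrightarrow{\sigma_j^*}T''$ is a single marked move there is a short sequence of ordinary Hurwitz moves carrying $\mathrm{unfold}(T')$ to $\mathrm{unfold}(T'')$. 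Composing these translations along the given chain $T_1'\to\cdots\to T_2'$ then yields the desired ordinary sequence from $T_1$ to $T_2$.

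The heart of the argument is to verify this local correspondence case by case, following the three rules of Definition \ref{Hurdef} and using $t^{-1}=t^2$ throughout. If the move does not touch the marked element it is an ordinary Hurwitz move in both pictures, applied at an index shifted by one to account for the extra entry, and nothing needs checking. If the move acts on a pair $(a,t^*)$ (rule~2), I slide $a$ rightward past both copies of $t^2$ using two consecutive ordinary moves; a direct computation gives
$$
(a,\,t^2,\,t^2)\xrightarrow{\sigma_i}(t^2,\,tat^2,\,t^2)\xrightarrow{\sigma_{i+1}}(t^2,\,t^2,\,t^2at),
$$
and $\mathrm{unfold}$ of the rule-2 image $(t^*,\,t^2at)$ is precisely $(t^2,\,t^2,\,t^2at)$. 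Symmetrically, if the move acts on $(t^*,b)$ (rule~3), I slide $b$ leftward by two ordinary moves,
$$
(t^2,\,t^2,\,b)\xrightarrow{\sigma_{i+1}}(t^2,\,b,\,b^2t^2b)\xrightarrow{\sigma_i}(b,\,b^2t^2b,\,b^2t^2b),
$$
and since $(b^2tb)^2=b^2t^2b$ this matches $\mathrm{unfold}$ of the rule-3 image $(b,\,(b^2tb)^*)$. In each case the marked element moves in the same direction and acquires the same conjugation as its unfolded block, so the two descriptions stay in lockstep.

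The remaining work is bookkeeping: tracking that the position of the marked element, equivalently of the consecutive block in the unfolded factorization, shifts consistently after each step, so that the index translation used above is always correct, and that the unfolding is well defined at every intermediate stage. The latter holds because $s$ determines $s^2$ uniquely in an order-three reflection group, so no ambiguity arises and the product is automatically preserved. I do not expect a genuine obstacle: the mathematical content is entirely the two displayed identities together with the observation that they compose across an arbitrary chain of moves. The only point requiring care is the index shift across the two-position block, which is exactly what forces the ``one or two moves'' dichotomy and is disposed of by the three cases above.
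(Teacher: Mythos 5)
Your proposal is correct and follows essentially the same route as the paper's own proof: reduce to a single marked move, handle the case away from the mark trivially, and simulate each of rules 2 and 3 by two consecutive ordinary Hurwitz moves, checking $(b^2tb)^2=b^2t^2b$ via $t^{-1}=t^2$. The only (cosmetic) difference is that your indexing of the two simulating moves in the rule-3 case is arguably more careful than the paper's.
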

	\begin{proof}
		It suffices to prove in the case where $T'_1$ and $T'_2$ are separated by a single marked Hurwitz move. Consider a marked Hurwitz move on $T'_1$.  If this move is performed on a pair of elements not involving the marked element, it will have the same effect when performed on the corresponding pair in $T_1$.  Otherwise consider the following move
		$$
		T'_1=(\ldots,s,\,t^*,\ldots)\xrightarrow{\sigma_i^*}(\ldots,\,t^*,\,t^2\cdot s\cdot t,\ldots):=T'_2.
		$$
		In the left marked factorization, we may replace $t^*$  with $(t^2,t^2)$ to get our factorization $T_1=(\ldots,\,s,\,t^2,\,t^2,\ldots)$.  In the right marked factorization, we may do the same to get the factorization $T_2=(\ldots,\,t^2,\,t^2,\, t^2\cdot s\cdot t,\ldots)$.  Additionally, 
		$$
		T=(\ldots,\,s,\,t^2,\,t^2,\ldots)\xrightarrow{\sigma_i}(\ldots,\,t^2,\, t\cdot s\cdot t^2,\,t^2,\ldots)\xrightarrow{\sigma_{i+1}}(\ldots,\,t^2,\,t^2,\, t^2\cdot s\cdot t,\ldots)=T_2,
		$$
		as desired.
		Now consider the following move:
		$$
		T'_1=(\ldots,t^*,\,s,\ldots)\xrightarrow{\sigma_i^*}(\ldots,s,\,(s^2\cdot t\cdot s)^*,\ldots):=T'_2.
		$$
		In the left marked factorization, we may replace $t^*$  with $(t^2,t^2)$ to get our factorization $T_1=(\ldots,t^2,\,t^2,\,s,\ldots)$.  In the right marked factorization, we may do the same to get the factorization $T_2=(\ldots,\,s,\,s^2\cdot t^2\cdot s,\,s^2\cdot t^2\cdot s,\ldots)$.  Additionally,
		\begin{multline*}
		T_1=(\ldots,t^2,\,t^2,\,s,\ldots)\xrightarrow{\sigma_i}(\ldots,\,t^2,\,s,\, s^2\cdot t^2\cdot s,\ldots) \\ \xrightarrow{\sigma_{i+1}}(\ldots,\,s,\,s^2\cdot t^2\cdot s,\,s^2\cdot t^2\cdot s,\ldots)=T_2,
		\end{multline*}
		as desired.  Since this covers all possible cases for $T'_1$ and $T'_2$ being separated by a single marked Hurwitz move, this concludes the proof.  
	\end{proof}
	\begin{lemma}\label{reduxlem}
		Let $T_1$ and $T_2$ be two reflection factorizations that contain a consecutive $(t,t)$ pair and have encoded factorizations $T_1^*=(\hat{T_1},\calR_{i})$ and $T_2^*=(\hat{T_2},\calR_j)$.  Then $T_1$ and $T_2$ are in the same Hurwitz orbit if $\hat{T_1}$ and $\hat{T_2}$ are in the same Hurwitz orbit and $i=j$.
	\end{lemma}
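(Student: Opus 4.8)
The plan is to assemble the lemma from the machinery already developed, treating the argument as a round trip: pass from the full factorizations $T_1,T_2$ to their encoded (marked) versions, transport the known Hurwitz equivalence of the underlying factorizations through the marked world, and then descend back to ordinary Hurwitz moves on $T_1,T_2$ via Proposition \ref{hurprop}. Throughout I would follow the notation of Definition \ref{duple}, writing $T_1'$ and $T_2'$ for the marked factorizations in which a single marked reflection replaces the consecutive $(t,t)$ pair of $T_1$ and $T_2$ respectively, so that their underlying factorizations are $\hat{T_1}$ and $\hat{T_2}$ and the marked elements lie in $\calR_i$ and $\calR_j$.

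First I would lift the given equivalence into the marked setting. By the remark following Definition \ref{Hurdef}, a marked Hurwitz move is nothing more than the corresponding ordinary Hurwitz move on the underlying factorization together with a bookkeeping shift of the mark; consequently every sequence of ordinary Hurwitz moves on $\hat{T_1}$ can be realized as a sequence of marked Hurwitz moves on $T_1'$. Since $\hat{T_1}$ and $\hat{T_2}$ are assumed to lie in the same Hurwitz orbit, lifting a connecting sequence carries $T_1'$ to some marked factorization $\tilde{T}$ whose underlying factorization is exactly $\hat{T_2}$. Because marked Hurwitz moves never change the conjugacy class of the marked element, the mark in $\tilde{T}$ still belongs to $\calR_i$.

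Next I would reconcile $\tilde{T}$ with the target $T_2'$. Both have the same underlying factorization $\hat{T_2}$, and their marked elements lie in $\calR_i$ and $\calR_j$ respectively. Invoking the hypothesis $i=j$, Proposition \ref{reposprop} applies and places $\tilde{T}$ and $T_2'$ in the same marked Hurwitz orbit; this is precisely where the equality of conjugacy classes is used, and it is what renders the final position of the mark irrelevant. Concatenating the two stages yields a series of marked Hurwitz moves from $T_1'$ to $T_2'$. Finally I would invoke Proposition \ref{hurprop}: since $T_1'$ and $T_2'$ are obtained from $T_1$ and $T_2$ by collapsing their $(t,t)$ pairs and are joined by marked Hurwitz moves, there is a corresponding series of ordinary Hurwitz moves from $T_1$ to $T_2$, so $T_1$ and $T_2$ lie in the same Hurwitz orbit.

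The one point demanding care — and the step I expect to be the real crux — is the lifting in the second paragraph: one must check that an arbitrary ordinary Hurwitz sequence on the underlying factorization, including moves acting at the marked position, can be realized move-by-move in the marked setting so that the endpoint's underlying factorization is genuinely $\hat{T_2}$ and the mark remains a single well-defined reflection. This is exactly what the three cases of Definition \ref{Hurdef} are engineered to guarantee, but it is the hinge on which the entire reduction turns, so I would state the lift explicitly rather than treat it as automatic.
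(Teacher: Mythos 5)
Your proposal is correct and follows essentially the same route as the paper's proof: lift the equivalence of $\hat{T_1}$ and $\hat{T_2}$ to marked Hurwitz moves on $T_1'$, reposition the mark using Proposition \ref{reposprop} (which is where $i=j$ enters), and descend to ordinary Hurwitz moves on $T_1$ and $T_2$ via Proposition \ref{hurprop}. Your closing observation that the lift of ordinary moves to marked moves deserves explicit justification is a fair point of care, but it is exactly what the remark after Definition \ref{Hurdef} supplies, so no gap remains.
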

	\begin{proof}
		Assume $\hat{T_1}$ and $\hat{T_2}$ are in the same Hurwitz orbit and $i=j$. Consider their associated marked factorizations $T_1'$ and $T_2'$.  Since their underlying factorizations are in the same orbit we may perform some series of marked Hurwitz moves on $T'_1$ so it is identical $T'_2$ up to which element is marked.  By Proposition \ref{reposprop}, we may reposition this marked element until the resulting factorization is equal to $T_2'$.  By Proposition \ref{hurprop} there exists a braid action which can be performed on $T_1$ to reach $T_2$.  Thus, $T_1$ and $T_2$ are in the same Hurwitz orbit, as desired.   
	\end{proof}
\subsection{Proof of Theorem}	
	We now come to a proof of theorem \ref{transthm} for G4, restated here for convenience, which follows from a simple induction using Lemma \ref{reduxlem} on a base case founded by Bessis' Theorem.
  
	\begin{recap}
		Let $W$ be one of the reflection groups G4 or G5 and let $c\in W$ be a Coxeter element. Two reflection factorizations $(t_1,t_2,\ldots,t_k)$ and $(t_1',t_2',\ldots, t_k')$ of $c$ are in the same Hurwitz orbit if and only if they have the same multiset of conjugacy classes.
	\end{recap}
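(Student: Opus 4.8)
The plan is to treat the two directions of the biconditional separately, with the forward implication immediate and the reverse implication carrying all the content through an induction on the length $k$ of the factorization. For the implication that lying in a common Hurwitz orbit forces a common multiset of conjugacy classes, I would appeal directly to Remark \ref{multisetrem}: a single Hurwitz move never alters the multiset of conjugacy classes, so this multiset is constant along every Hurwitz orbit and any two factorizations sharing an orbit must agree on it.

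For the converse I would argue by strong induction on $k$. The base cases are already in hand: Bessis' Theorem \ref{thm:Bessis} gives transitivity for the shortest (length-two) factorizations in both G4 and G5, while Proposition \ref{g5induction} covers the additional lengths $k = 3, 4$ needed in G5. For the inductive step, let $T_1$ and $T_2$ be length-$k$ factorizations of $c$ sharing a multiset $M$, with $k \geq 3$ in G4 and $k \geq 5$ in G5. By the appropriate pair lemma (Lemma \ref{pairlem} or Lemma \ref{g5pairlem}), each of $T_1$ and $T_2$ has in its Hurwitz orbit a factorization, say $S_1$ and $S_2$, containing a consecutive $(t,t)$ pair. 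The proofs of those lemmas always exhibit such a pair inside the conjugacy class singled out by the multiset (the majority class), so because $T_1$ and $T_2$ share $M$ I may arrange for the repeated pairs of $S_1$ and $S_2$ to lie in the same class $\calR_b$.

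Having matched the classes, I would pass to the encoded factorizations. Writing each repeated pair in the form $(t^2, t^2)$ as in Definition \ref{duple}, the encodings become $(\hat{S_1}, \calR_i)$ and $(\hat{S_2}, \calR_i)$ with a common recorded class $\calR_i$, the square class of $\calR_b$, which depends only on $\calR_b$. The underlying factorizations $\hat{S_1}$ and $\hat{S_2}$ are length-$(k-1)$ factorizations of the same $c$, since the substitution $t^2 \cdot t^2 = t$ preserves the product, and they share the multiset obtained from $M$ by deleting two copies of $\calR_b$ and inserting one copy of $\calR_i$. The inductive hypothesis therefore places $\hat{S_1}$ and $\hat{S_2}$ in a common Hurwitz orbit, and since the recorded classes agree, Lemma \ref{reduxlem} lifts this to place $S_1$ and $S_2$ in a common orbit. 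As $S_1$ lies in the orbit of $T_1$ and $S_2$ in that of $T_2$, transitivity of the orbit relation shows $T_1$ and $T_2$ lie in the same Hurwitz orbit, completing the induction.

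The step I expect to be the crux is the class-matching: arranging the two repeated pairs to sit in the same conjugacy class, so that the encoded classes satisfy the hypothesis $i = j$ of Lemma \ref{reduxlem}. This rests on the pair lemmas delivering a $(t,t)$ pair in a class dictated by the shared multiset rather than merely asserting that some pair exists; were that class free to depend on the individual factorization, the two encodings could record different square classes and the reduction would fail to link $S_1$ to $S_2$. Checking that the constructions in the pair lemmas are genuinely multiset-determined in every case is thus the load-bearing part of the argument.
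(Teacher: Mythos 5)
Your proposal is correct and follows essentially the same route as the paper: the forward direction via Remark \ref{multisetrem}, and the converse by induction on length with base cases from Theorem \ref{thm:Bessis} and Proposition \ref{g5induction}, reduction via Lemmas \ref{pairlem}/\ref{g5pairlem}, and lifting via Lemma \ref{reduxlem}. You also correctly identify the same crux the paper flags, namely that the conjugacy class of the produced $(t,t)$ pair is determined by the shared multiset and the strategy used, so the encoded classes can be matched.
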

	\begin{proof}[Proof in the case of G4.]
		\noindent``$\Rightarrow$" 
		
		Follows the basic facts of Hurwitz moves as stated in Remark \ref{multisetrem}.
		
		\noindent``$\Leftarrow$"  
		
		We conduct a proof by induction on the length $m$ of the factorizations.  Fix a Coxeter element $c$.  The base case $m=2$ is given by Theorem \ref{thm:Bessis} as these are the shortest reflection factorizations of Coxeter elements.  Assume the property holds for factorizations of length $k$.  Consider those of length $(k+1)$.  Take two reflection factorizations of $c$ with the same multiset of conjugacy classes.  Lemma \ref{pairlem} states that both of these have, in their Hurwitz orbit, a factorization with a consecutive pair of equal reflections.  Observe from the proofs of Propositions \ref{3facprop}, \ref{3facprop2}, and \ref{4facprop} that the conjugacy class of a consecutive $(t,t)$ pair which appears in the factorization's orbit depends only on the starting configuration of conjugacy classes and strategy used.  Since our two chosen factorizations have the same multiset of conjugacy classes, we may ensure that the selected consecutive $(t,t)$ pairs in each uses the same starting configuration of conjugacy classes and the same strategy from one of these proofs.  Call the factorizations in which these pairs appear $T_1$ and $T_2$.  From these, we may construct encoded factorizations $T_1^*$ and $T_2^*$ with the equal $\calR_i$ elements.  By the inductive hypothesis, $\hat{T_1}$ and $\hat{T_2}$ are in the same Hurwitz orbit.  Therefore, by Lemma \ref{reduxlem}, so are $T_1$ and $T_2$, as desired.
	\end{proof}

    The proof of Theorem \ref{transthm} for G5 is very similar to that of G4, as intended.
	\begin{proof}[Proof in the case of G5.]
		The proof for G5 is very similar.  It differs in the following manner.  As stated above, it has two additional base cases beyond $m=2$ proven by Bessis, namely $m=3,m=4$.  Both are given by Proposition \ref{g5induction}.  Rather than appealing to Lemma \ref{pairlem} for the existence of consecutive pairs, we appeal to Lemma \ref{g5pairlem}.  We must then check that the proofs of Propositions \ref{4facprop}, \ref{g5threetwo}, and \ref{g5twotwoone} ensure that we may pick the conjugacy class of the desired pair with the proper set-up, which they do.  From here, the proof is identical.
\end{proof}

\section{Further remarks and open problems}\label{P.S.}

\subsection{Other groups with generators of order $3$}~ 
	
	Lemma \ref{reduxlem} makes use of the concept of treating consecutive $(t,t)$ pairs as a single reflection $t^2$.  This concept is dependent on the fact that $t^2=t^{-1}$ for any reflection $t$, a fact which is logically equivalent to all reflections in a complex reflection group being of order $3$.  As such, it is conceivable that, with some alterations, a strategy like the one above may be used to prove the conjecture for these other groups.
	\begin{remark}\label{othergroups}
		Of the irreducible Shepherd-Todd groups, G4, G5, G20, G25, and G32 all have exclusively reflections of order 3.
	\end{remark}
	We performed some additional research on these groups, the majority with G25 due to properties which bear certain parallels to G4 (see Section \ref{g25sec} below).  Though we did not reach a proof of the conjecture for G20, G25, or G32, we did draw some conclusions about the method.  Our method of induction was chosen because it is relatively "opposite" to the following remark.
	
	\begin{remark}
		Let $W$ be a reflection group with exclusively generators of order three.  Let $c\in W$ be a Coxeter element.  Let $T=(t_1,\,t_2,\ldots,\,t_m)$ be a length-$m$ reflection factorization of $c$.  The factorization $T^+=(t_1^2,\,t_1^2,\,t_2,\ldots,\,t_m)$ is a length-$(m+1)$ reflection factorization of $c$.
	\end{remark}
	
	We utilize this result combined with the following conjecture to a priori find the possible multisets of conjugacy classes for the reflection factorizations of Coxeter elements of any given length.
	
	\begin{conjecture}[Corollary to Conj. \ref{conj}]\label{genconj}
		Let $W$ be a reflection group with exclusively generators of order three.  Let $c\in W$ be a Coxeter element.  Let $T$ be a longer reflection factorization of $c$.  This factorization has, in its Hurwitz orbit, a reflection with a consecutive $(t,t)$ pair.
	\end{conjecture}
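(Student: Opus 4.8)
The plan is to isolate the combinatorial engine driving Lemmas~\ref{pairlem} and~\ref{g5pairlem} and to supply it with a uniform, group-independent input. In both G4 and G5 the argument is the same: Proposition~\ref{sortingprop} is used to sort a long factorization so that a single conjugacy class dominates, the relevant positions are grouped into disjoint consecutive pairs, and a pigeonhole count on how many reflections of the dominant class each pair's length-two orbit produces forces two pairs to share a reflection~$t$, which is then maneuvered into a consecutive $(t,t)$. The only genuinely group-specific ingredients are the spreading Remarks~\ref{2facrem}--\ref{g5nonadjrem}, which were read off by hand or empirically. So the first and central step is to prove a \emph{uniform spreading lemma}: in any reflection group with reflections exclusively of order three, the Hurwitz orbit of a length-two factorization $(t_1,t_2)$ of a fixed element realizes at least a prescribed number of distinct reflections in each conjugacy class that meets the orbit.

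To prove such a lemma I would exploit that a length-two factorization of a fixed element $x$ is pinned down by its second entry: writing the pair as $(x s^{-1},\, s)$, the single Hurwitz generator acts by
\[
(x s^{-1},\, s)\;\xrightarrow{\ \sigma_1\ }\;(s,\, s^{-1}x),
\]
so on the distinguished entry it is the map $s\mapsto s^{-1}x$. Iterating, the even steps return $\langle x\rangle$-conjugates $x^{-k} s\, x^{k}$ of $s$ and the odd steps return their duals $s^{-1}x$; hence the reflections appearing in the orbit are governed by the order of $\langle x\rangle$-conjugation on $s$, and in particular come in a structured family whose size I would bound below by analysing the centralizer of $x$ acting on the reflections of each class. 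Since $x$ here is a product and need not be a reflection, the relevant quantity is the length of this cyclic conjugation orbit; the hoped-for outcome is a clean statement of the form ``a non-commuting same-class pair spreads over at least half its class, and a mixed pair spreads over at least two reflections of each class it meets,'' recovering Remarks~\ref{2facrem}--\ref{g5nonadjrem} as instances.

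With the spreading lemma in hand, the assembly is routine and mirrors Propositions~\ref{3facprop2} and~\ref{g5threetwo}. Given a factorization longer than the shortest, a determinant (or pigeonhole) argument as in Proposition~\ref{3conrem} shows one conjugacy class $\calR_i$ occurs with high multiplicity; Proposition~\ref{sortingprop} gathers those reflections into consecutive positions, which I then partition into disjoint adjacent pairs. Disjoint pairs have independent Hurwitz actions and their products are each fixed, so I may adjust the two slots of every pair independently; since a reflection appearing in a pair orbit can occupy either slot of that pair, once the spreading counts summed over the pairs exceed $|\calR_i|$ some reflection $t\in\calR_i$ is shared by two adjacent pairs and can be slid to meet its copy, producing the consecutive $(t,t)$.

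The hard part will be the spreading lemma, and specifically controlling \emph{small} orbits: a pair of commuting reflections has Hurwitz orbit of size two and contributes nothing beyond itself, so the partition into pairs must be chosen to avoid (or separately dispose of) commuting pairs, which is exactly the ``no consecutive elements in the same commuting set'' bookkeeping that made the G5 cases delicate. In the larger groups of Remark~\ref{othergroups}---G20, G25, and G32, of higher rank and with richer conjugacy-class and commuting-set structure---establishing uniform lower bounds on orbit spread, and pinning down the threshold length above which the pigeonhole closes (with the finitely many shorter lengths checked directly, as in Proposition~\ref{g5induction}), is where the real work lies. A successful uniform spreading lemma would, by the mechanism above, immediately yield Conjecture~\ref{genconj}, and, combined with Lemma~\ref{reduxlem} and base cases verified as in Proposition~\ref{g5induction}, the full transitivity statement for each such group.
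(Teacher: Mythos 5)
The statement you are attacking is stated in the paper as a \emph{conjecture}, not a theorem: the paper proves its instances only for G4 (Lemma~\ref{pairlem}) and G5 (Lemma~\ref{g5pairlem}), and your proposal is essentially the paper's own strategy abstracted -- sort by Proposition~\ref{sortingprop}, pair up positions, count reflections spread by length-two orbits, pigeonhole. The genuine gap is that your central ``uniform spreading lemma'' is not proved, and in the strength you hope for it is false. You ask for ``a non-commuting same-class pair spreads over at least half its class,'' but in G25 a same-class non-commuting pair spreads over only $3$ of the $12$ elements of its class (the paper's G25 analogue of Remark~\ref{2facrem}), and a mixed-class pair yields only $2$ of $12$ per class. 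The conjugation-orbit analysis via $s\mapsto s^{-1}x$ correctly explains \emph{why} these orbits are small and structured, but it bounds the orbit size by the order of $x$ acting by conjugation, which does not grow with the size of the conjugacy classes; so the spread-to-class-size ratio degrades as the groups get larger, and no uniform constant fraction is available.

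Consequently the claim that ``the assembly is routine'' fails: for the pigeonhole to close you need the summed spreading counts to exceed $|\calR_i|$, and with classes of size $12$ and pairs contributing $2$--$3$ new reflections each, this forces a length threshold. The paper carries out exactly this computation in Proposition~\ref{g25l13} and obtains only ``length at least $13$'' or ``at least nine reflections in one class,'' explicitly leaving lengths $7$ through $12$ (e.g.\ the multiset $\{1^6,2^6\}$ at length twelve) unresolved; this is precisely why the statement remains a conjecture. For G20 and G32 the situation can only be worse. Your plan is a faithful reconstruction of the paper's method and correctly identifies where the difficulty sits, but it does not supply the missing idea that would close the gap between the pigeonhole threshold and the finitely many directly checkable lengths, so it does not constitute a proof of Conjecture~\ref{genconj}.
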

	
	Conjecture \ref{genconj} is the most general versions of our Lemmas \ref{pairlem} and \ref{g5pairlem}.  Proving it in specific cases, along with some version of Lemma \ref{reduxlem} yields Conjecture \ref{conj}.  This fact gives rise to the following.
	
	\begin{conjecture}
		Let $W$ be a reflection group with exclusively generators of order three.  The following are logically equivalent.
		\begin{enumerate}
			\item Conjecture \ref{conj}
			\item Conjecture \ref{genconj}
		\end{enumerate}	
	\end{conjecture}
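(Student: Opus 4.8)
The plan is to prove the two implications separately, exploiting the fact that the entire reduction apparatus of Section~\ref{redux} depends only on the identity $t^2=t^{-1}$, which holds for every reflection precisely because all reflections have order three. Throughout, $W$ denotes a reflection group with exclusively generators of order three and $c\in W$ a Coxeter element.

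First I would treat the implication \textbf{(2)$\Rightarrow$(1)}, that Conjecture~\ref{genconj} implies Conjecture~\ref{conj}; this is the substantive direction and mirrors the argument of Section~\ref{proofsec}. The plan is to reconstruct the marked-element machinery of Definitions~\ref{marked}--\ref{duple} for an arbitrary order-three $W$. Definition~\ref{Hurdef} and Proposition~\ref{hurprop} transfer verbatim, since they use nothing about $W$ beyond $t^2=t^{-1}$; what must be redone are the repositioning results, Propositions~\ref{repositionrem} and \ref{reposprop}, whose current proofs invoke the size-three orbit of a same-class pair (Remark~\ref{2facrem}), a fact special to the G4 subgroup structure. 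Granting a general form of Lemma~\ref{reduxlem}, the proof of Conjecture~\ref{conj} is then an induction on the length $m$ of the factorization: the base case is the shortest factorizations, supplied for every well-generated group by Bessis' Theorem~\ref{thm:Bessis}; in the inductive step, two length-$m$ factorizations sharing a multiset of conjugacy classes are fed to Conjecture~\ref{genconj} to expose consecutive $(t,t)$ pairs, encoded down to length $m-1$ via Definition~\ref{duple}, compared by the inductive hypothesis, and lifted back by Lemma~\ref{reduxlem}.

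Next I would handle \textbf{(1)$\Rightarrow$(2)}, the ``corollary'' direction. Assuming Conjecture~\ref{conj}, the Hurwitz orbit of a longer factorization $T$ of $c$ is exactly the set of all factorizations of $c$ of the same length and multiset of conjugacy classes, so it suffices to exhibit a single factorization of $c$ of that length and multiset containing a consecutive $(t,t)$ pair. For this I would use the doubling observation preceding Conjecture~\ref{genconj}: a factorization $(u,u,w_1,\ldots)$ of $c$ with a $(t,t)$ pair is equivalent to a shorter factorization $(w_1,\ldots)$ of $uc$, so the task reduces to classifying which multisets are realizable and checking that every realizable multiset of a longer factorization arises as such a doubling. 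The determinant (and more generally the image in $W^{\mathrm{ab}}$) already rules out many multisets --- for instance in G4 only the doubling multiset $\{1\cdot\calR_1,\,2\cdot\calR_2\}$ survives at length three --- and I expect the surviving multisets to be exactly the doublings, so that Proposition~\ref{sortingprop} together with the doubling construction produces the required witness.

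The hard part will be the general form of the repositioning Proposition~\ref{reposprop}. A marked Hurwitz move shifts a mark between the two positions it acts on, so a full cycle of the underlying pair-orbit returns the mark to its starting side whenever that orbit has even size; the naive $(\sigma^*)^{d}$ argument therefore fails outside the size-three case, and any general proof must move the mark through its interactions with the rest of the factorization rather than within a single pair. A secondary obstacle, also invisible in the G4 and G5 arguments, is that the inductive step of \textbf{(2)$\Rightarrow$(1)} requires the two equal-multiset factorizations to reduce to $(t,t)$ pairs of a \emph{common} conjugacy class, so one needs the class of the exposed pair to be determined by the multiset; in Section~\ref{proofsec} this came for free by inspecting the explicit strategies of Lemmas~\ref{pairlem} and \ref{g5pairlem}, and a clean general version would strengthen Conjecture~\ref{genconj} to record that class.
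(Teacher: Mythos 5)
The first thing to say is that the paper offers no proof of this statement: it is stated as a conjecture, motivated only by the observation that proving Conjecture~\ref{genconj} ``along with some version of Lemma~\ref{reduxlem}'' would yield Conjecture~\ref{conj}. Your outline faithfully expands that motivating remark into a two-implication plan, and your diagnosis of where the difficulties lie is accurate --- but the proposal does not close those difficulties, so it does not establish the equivalence. In the direction (2)$\Rightarrow$(1), the reduction machinery is not actually general: Proposition~\ref{hurprop} transfers, but Proposition~\ref{reposprop} rests on Remark~\ref{2facrem} (same-class pairs have orbits of size three), which is a fact about G4 and G5, not about order-three reflection groups at large; the paper's own treatment of G25 already needs a bespoke repair (Proposition~\ref{G25reposprop}) that invokes Proposition~\ref{g25l6}, an empirically verified base case, to rule out a degenerate configuration. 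You name this as ``the hard part'' but supply no argument, and without it Lemma~\ref{reduxlem} --- and hence the induction --- is unavailable. The same goes for the class-matching issue: Conjecture~\ref{genconj} as stated only guarantees \emph{some} consecutive $(t,t)$ pair in each orbit, and if the two equal-multiset factorizations expose pairs from different conjugacy classes, their encoded factorizations have different multisets and the inductive hypothesis says nothing. Your suggested fix is to strengthen Conjecture~\ref{genconj}, which amounts to proving the equivalence for a different statement than the one asserted.

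The direction (1)$\Rightarrow$(2) has a gap of the same character. Granting Conjecture~\ref{conj}, it does suffice to exhibit, for each realizable length and multiset, one factorization of $c$ with that data containing a consecutive $(t,t)$ pair; but your claim that the realizable multisets are exactly those arising from the doubling $T\mapsto(t_1^2,t_1^2,t_2,\ldots,t_m)$ is offered as an expectation, not a proof, and the abelianization constraint only rules multisets out --- it does not show that every surviving multiset is achieved by a doubling of a shorter \emph{realizable} multiset. So both implications currently rest on unproven intermediate claims. To be clear, none of this means your route is wrong: it is the natural route, and it is the one the paper gestures at. But as written the proposal is a program with the same open points the paper has, not a proof, and the statement should still be regarded as conjectural.
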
 
	
	Another group outside the complex reflection groups generated exclusively by elements of order three is the alternating group $A_n$ of even permutations generated by $3$-cycles.  The Hurwitz action on these groups has been studied by M{\"u}hle and Nadeau who have proven, among other things, that if $n$ is odd then the Hurwitz action is transitive on the shortest factorizations of an $n$-cycle as a product of $3$-cycles \cite[Thm 1.2]{MN}.  This theorem's resemblance to Bessis' Theorem \ref{thm:Bessis} gives rise to the obvious question. 
	\begin{question}
		Can the methods presented here be used in a similar manner to prove a conjecture similar to Conjecture \ref{conj} for the alternating groups $A_n$ where $n$ is odd?
	\end{question}
	
\subsection{Remarks about G25}\label{g25sec}
	
	The following are some remarks about G25.  The complex reflection group G25 is of rank three and is minimally generated by three reflections $A$, $B$, and $C$ which have the relations
	\begin{gather*}
		ABA=BAB\\
		CBC=BCB\\
		AC=CA\\
		A^3=B^3=C^3=I.
	\end{gather*}
	The 24 reflections are separated into two conjugacy classes of size twelve.  The 24 Coxeter elements are divided similarly.  As with G4, this separation is by eigenvalues.  Unlike G4, there are some additional structures within the conjugacy classes of reflections.  Within each conjugacy class, there are four triples of reflections which commute with each other.  It should be noted that the the inverses of the elements within a triple lie in a single triple in the other class, and that all six of these elements commute.
	\begin{remark}
		Consider reflections $t$ and $t^2$.  The conjugacy class of $t^2$ contains the squares of all elements of the conjugacy class of $t$.
	\end{remark}	
	Remark \ref{2facrem} holds with the updated constraint that the elements not be from the same triple.  Remark \ref{neqrem} becomes the following.  
	\begin{remark}
		A factorization which consists of exactly one pair of elements from different conjugacy classes and not inverse triples has orbit of size four in which two elements from each conjugacy class appear.  None of these elements are in the same triples or inverse triples.
	\end{remark}
	For elements within the same triple or from inverse triples, we recall that the Hurwitz orbit of a factorization which consists of a pair of reflections which commute has size two.
	
	Thus, with a few complications, G25 is very similar to G4.  The convenient part about G4, however, was that the size of its conjugacy classes was relatively small, which meant these orbits contained greater portions of the classes, making it easier to find $(t,t)$ pairs, or at least to prove their existence.
	\begin{proposition}\label{g25l6}
		Conjecture \ref{conj} holds for G25 up to length-six reflection factorizations.
	\end{proposition}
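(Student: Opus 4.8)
The plan is to replicate the inductive architecture developed for G4 and G5 in Section~\ref{proofsec}, exploiting that every reflection of G25 has order three so that $t^2=t^{-1}$ is again a reflection. The forward direction ``$\Rightarrow$'' is immediate from Remark~\ref{multisetrem}, so only ``$\Leftarrow$'' requires work. Crucially, the reduction machinery of Section~\ref{redux}---the marked and encoded factorizations of Definitions~\ref{marked}--\ref{duple} and the reconciliation of marked and unmarked Hurwitz moves culminating in Lemma~\ref{reduxlem}---uses nothing about G25 beyond the order-three hypothesis, so it applies verbatim. Since G25 has rank three, the shortest reflection factorizations of a Coxeter element have length three, and Bessis' Theorem~\ref{thm:Bessis} settles this base case. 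It then suffices to establish, for each $m\in\{4,5,6\}$, the G25 analogue of Lemma~\ref{pairlem}: every length-$m$ reflection factorization of a Coxeter element has a consecutive $(t,t)$ pair somewhere in its Hurwitz orbit. Granting this, induction from length three via Lemma~\ref{reduxlem} yields the claim up to length six, exactly as in the G4 argument.

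First I would cut down the possible multisets of conjugacy classes at each length by the determinant argument of Proposition~\ref{3conrem}. Writing $a$ and $b$ for the number of factors drawn from the two conjugacy classes of reflections, of determinants $\omega$ and $\omega^2$, the requirement that the product have the fixed determinant of the Coxeter element forces $a+2b$ into a single residue class modulo $3$, eliminating most splits $(a,b)$ with $a+b=m$. For each surviving split I would then run a pigeonhole argument built on the G25 versions of the Step~1 orbit facts stated just above this proposition: the updated Remark~\ref{2facrem} for same-class non-equal pairs not sharing a triple, the size-four orbit for cross-class pairs that are not inverse triples, and the size-two orbit of a commuting pair. As in the G5 proofs of Propositions~\ref{g5threetwo} and~\ref{g5twotwoone}, I would first use Proposition~\ref{sortingprop} together with short commuting moves to arrange a representative in which no two consecutive factors lie in a common commuting triple, so that each adjacent pair genuinely contributes new class members to its orbit; counting these contributions against the size of a conjugacy class then forces a repeated reflection, hence a $(t,t)$ pair after resorting.

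The main obstacle is precisely this final counting. In G4 each conjugacy class holds only four reflections while a single length-two orbit already exposes three of them (Remark~\ref{2facrem}), so a repeat is nearly automatic; in G25 each class holds twelve reflections and a length-two orbit exposes at most three of any one class, so the pigeonhole margin is far narrower and, for the longer lengths---five and especially six---a naive count over a single class need not close. Tightening it requires descending to the finer triple structure, namely that each class splits into four commuting triples whose inverses form triples in the other class, in order to pin down exactly which reflections an orbit can produce; and for the residual configurations that the structural count still fails to resolve I would, following the precedent of Proposition~\ref{g5induction}, verify the relevant Hurwitz orbits exhaustively by computer. It is this growth in the number of cases together with the narrowing of the counting margin that confines the present statement to length at most six: a uniform treatment of all lengths in G25 would demand a genuinely new idea and is left to Conjecture~\ref{genconj}.
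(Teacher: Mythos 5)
Your proposal takes a genuinely different route from the paper, but it contains a real gap. The paper's proof of this proposition is deliberately short: length three is Bessis' Theorem~\ref{thm:Bessis} (the shortest factorizations in a rank-three group), and lengths four, five, and six are settled by exhaustive computer enumeration of the Hurwitz orbits; no reduction machinery is invoked at all. Your plan instead tries to import the inductive architecture of Section~\ref{proofsec}, and the first problem is your claim that the machinery of Section~\ref{redux} ``applies verbatim'' because it uses nothing beyond the order-three hypothesis. That is false. Proposition~\ref{repositionrem}, on which Proposition~\ref{reposprop} and hence Lemma~\ref{reduxlem} rest, requires the same-class pair $(t_1,t_2)$ to have a Hurwitz orbit of size three (via Remark~\ref{2facrem}) so that $(\sigma_1^*)^3$ transfers the mark without disturbing the underlying factorization. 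In G25 two reflections of the same conjugacy class lying in a common commuting triple have an orbit of size two, and the repositioning argument breaks. The paper flags exactly this difficulty and repairs it in Proposition~\ref{G25reposprop} --- whose proof eliminates the irreparable configuration (at most three reflections from each class, hence total length at most six) precisely by citing Proposition~\ref{g25l6}. Using the reduction machinery to prove the length-$\le 6$ cases therefore risks circularity unless you independently rule out that configuration at those lengths, which you do not address.

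The second problem is that you concede the decisive step --- the pigeonhole count forcing a repeated reflection --- does not close for lengths five and six, since a conjugacy class of G25 has twelve elements while a length-two orbit exposes at most three of them, and you fall back on exhaustive computer verification for the ``residual configurations.'' Once that concession is made, the structural scaffolding certifies nothing: the cases your counting handles and the cases it does not are both ultimately checked by machine, so the proof collapses to what the paper actually does (Bessis for length three, Sage for the rest), with the added liability of the circularity above. A correct write-up along your lines would either have to prove the G25 analogue of Lemma~\ref{pairlem} for lengths four through six outright, together with a non-circular repositioning statement, or simply state the computation as the proof.
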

	\begin{proof}
		Bessis' Theorem gives that this is true for length-3 reflection factorizations of Coxeter elements.  This fact has been observed through exhaustive empirical analysis of the Hurwitz orbits of length-4, length-5, and length-6 reflection factorizations of Coxeter elements.
	\end{proof}
	\begin{proposition}\label{g25l13}
		Each reflection factorization of length at least 13 in G25 has in its Hurwitz orbit a reflection factorization with a desired pair.  Each reflection factorization in G25 with at least nine reflections of the same conjugacy class has in its Hurwitz orbit a reflection factorization with a desired pair.    
	\end{proposition}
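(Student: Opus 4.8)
The plan is to adapt the pigeonhole strategy of Propositions \ref{g5threetwo} and \ref{g5twotwoone} to the coarser class structure of G25, where each of the two conjugacy classes has twelve reflections partitioned into four commuting triples. Throughout I assume for contradiction that \emph{no} consecutive $(t,t)$ pair occurs in the orbit, and I force a contradiction by counting, with multiplicity, the class-$\calR_1$ reflections that must appear in that orbit. The arithmetic engine is the trio of G25 orbit facts recorded above: a non-commuting same-class pair has an orbit of size three exhibiting three reflections of its class (the G25 form of Remark \ref{2facrem}); a non-commuting, non-inverse cross-class pair has an orbit of size four exhibiting two reflections of each class (the G25 form of Remark \ref{neqrem}); and any commuting pair has an orbit of size two and yields nothing new. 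Since $|\calR_1|=12$, once the number of $\calR_1$-appearances forced into the orbit exceeds twelve, two of them coincide, and I then argue the coincidence can be realized adjacently.

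For the second assertion, suppose without loss of generality that $\calR_1$ contributes at least nine reflections. Using Proposition \ref{sortingprop} I gather all $\calR_1$ reflections into one consecutive block; if any two are already equal we are done, so I may assume they are distinct, leaving at most three of the twelve absent. Whenever two adjacent block entries lie in a common triple (hence commute and contribute nothing), I break the adjacency by a single local Hurwitz move, exactly as in the G5 cases, swapping in a different-triple neighbor; the conjugate stays in $\calR_1$, so the block is preserved. Because each triple supplies at most three of the nine entries, this produces four disjoint cross-triple adjacent pairs, each of whose size-three orbit forces a fresh $\calR_1$-reflection. Counting the nine present reflections together with these four forced ones gives thirteen $\calR_1$-appearances, so two must coincide; note that eight present reflections would give only $8+4=12$, which is why the threshold is exactly nine.

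For the first assertion, a length-$13$ factorization has, by pigeonhole on the two classes, at least seven reflections in one class, say $\calR_1$; if it has at least nine I invoke the second assertion. Otherwise there are seven or eight reflections in $\calR_1$ and the remainder in $\calR_2$, and I sort (Proposition \ref{sortingprop}) into an alternating arrangement $\calR_1\calR_2\calR_1\cdots$, again breaking any consecutive same-commuting-set entries (a triple or an inverse triple) by local moves. In the seven–six split this yields six disjoint cross pairs, each forcing one fresh $\calR_1$-reflection by the G25 form of Remark \ref{neqrem}, for $7+6=13$ appearances; more lopsided splits close even faster, and the eight-in-a-class sub-case is absorbed by mixing in one within-class pair. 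The threshold is sharp: a length-$12$ factorization split six–six yields only six cross pairs and $6+6=12$ appearances, with no surplus, which is precisely why $13$ is the right bound.

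The main obstacle, and the place where the argument must be most careful, is upgrading the raw pigeonhole conclusion — that \emph{some} $\calR_1$-reflection appears twice \emph{somewhere} in the orbit — to the statement that two equal reflections occupy \emph{consecutive} positions of a single factorization. One cannot simply slide the two equal copies together, since moving a reflection past another conjugates it and may destroy the equality. As in Propositions \ref{g5threetwo} and \ref{g5twotwoone}, closing this gap requires a bookkeeping step that positions each forced pair so that its third (or cross-exhibited) reflection lands next to an equal copy already present, mirroring the role of the middle pair in Proposition \ref{4facprop}; where the precise identity of the reflection occupying a given pair-orbit is needed, I expect to appeal to the exhaustive Sage computation already used for Proposition \ref{g25l6}. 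The secondary hurdle — that Proposition \ref{sortingprop} controls only conjugacy classes and not triple membership — is handled by the local breaking moves described above, so that the triple-level structure needed for the count is achieved without an independent sorting principle.
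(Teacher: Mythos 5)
Your proposal follows essentially the same route as the paper's own (explicitly labeled) proof sketch: sort via Proposition \ref{sortingprop} into an alternating arrangement (resp.\ a consecutive same-class block of nine), break commuting adjacencies by local moves, and count the $\calR_1$-reflections forced to appear by the disjoint adjacent pairs' orbits until the total exceeds $|\calR_1|=12$, with the identical arithmetic $7+6=13$ and $9+4=13$ justifying both thresholds. The adjacency-upgrade issue you flag (turning a repeated appearance somewhere in the orbit into a \emph{consecutive} equal pair) is likewise left unaddressed in the paper's sketch, so you have not diverged from it there either.
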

	\begin{proof}[Proof sketch]
		 Consider the first condition.  Define the reflection factorization $T$ with the desired property.  We may assume, without loss of generality, that this contains elements of $\calR_1$.  Order $T$ such that $t_1$ is in $\calR_1$ and alternate between classes until for as long as possible.  This can be done in a manner such that no consecutive elements commute.  Consider the orbit of $(t_1,t_2)$.  An additional element of $\calR_1$ appears.  Note this element.  If it does not appear in the rest of $T$, repeat this process for $(t_3,t_4)$.  If this new element of $\calR_1$ does not appear in the rest of $T$ and is not the first noted element, continue to $(t_5,t_6)$.  Iterate this process a further four times.  This process has generated 6 new elements of $\calR_1$.  Since $T$ began with at least 7 and there are only 12 elements of $\calR_1$, there must have been a repeated element at some step by the pigeonhole principle.
		 
		 The proof of the second condition is very similar from the starting position of all nine elements of the same conjugacy class arranged consecutively.
	\end{proof}  
   
	The combination of these three propositions leaves a small gap for which we have not yet found a way to prove that a reduction is possible.  The length-seven reflection factorizations a probably within the realm of brute force calculation by a supercomputer.  Additionally, of the length-twelve factorizations, the only one not covered by the above proposition is the one with multiset of conjugacy classes $\{1^6,2^6\}$.
	
	Another small difficulty is with Proposition \ref{repositionrem}.  This depends on the orbit of two elements of the same conjugacy class to be odd which is not always the case in G25 given elements from the same triple.  The fix for this is presented as follows.
	\begin{proposition}\label{G25reposprop}
		Two marked factorizations of a Coxeter element in G25 that have the same underlying factorization are in the same Hurwitz orbit if the marked elements are of the same conjugacy class.
	\end{proposition}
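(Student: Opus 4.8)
The plan is to mirror the proof of Proposition \ref{reposprop} as closely as possible, reducing the statement to the single task of swapping a mark between two \emph{adjacent} same-class reflections and then bubbling it into place. Using Proposition \ref{sortingprop}, I would first pass to a factorization in the common Hurwitz orbit in which all reflections of the mark's class $\calR_i$ sit consecutively, say in positions $1$ through $\ell$. Since a marked Hurwitz move is just an ordinary Hurwitz move on the underlying factorization together with a relocation of the mark (Definition \ref{Hurdef}), the mark can only ever travel among these $\calR_i$-positions. It therefore suffices to show that for any two adjacent reflections $t_1,t_2\in\calR_i$ the two marked configurations $(\ldots,t_1,t_2^*,\ldots)$ and $(\ldots,t_1^*,t_2,\ldots)$ lie in a common Hurwitz orbit.

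When $t_1$ and $t_2$ are not in the same triple they do not commute, the orbit of the pair $(t_1,t_2)$ has odd size three, and Proposition \ref{repositionrem} applies verbatim: $(\sigma^*)^3$ returns the underlying pair to itself while shifting the mark. The only genuinely new case is when $t_1$ and $t_2$ belong to a common triple, so that they commute and the orbit of $(t_1,t_2)$ has even size two; here a direct power of $\sigma^*$ cannot work, since on the four marked states lying over this two-element orbit the marked move splits into two disjoint $2$-cycles, leaving $(t_1,t_2^*)$ and $(t_1^*,t_2)$ mutually unreachable inside the length-two picture. The fix I would use is to route the mark through a stepping-stone reflection $u\in\calR_i$ lying in a \emph{different} triple, hence commuting with neither $t_1$ nor $t_2$. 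Concretely, bring such a $u$ adjacent to the pair by ordinary Hurwitz moves, obtaining $(\ldots,t_1,t_2^*,u,\ldots)$; shuttle the mark from $t_2$ onto $u$ using the (reversible) odd-orbit repositioning of Proposition \ref{repositionrem}, which applies because $t_2$ and $u$ do not commute; apply one ordinary Hurwitz move to the now-unmarked pair so that $u$ slides left of the conjugate $t_2'=u^{2}t_2u$, giving $(\ldots,t_1,u^*,t_2',\ldots)$; move the mark from $u$ onto $t_1$ by the same odd-orbit repositioning, again legitimate since $t_1$ and $u$ do not commute; and finally reverse the auxiliary ordinary moves, none of which touched the marked position, to restore the underlying factorization to its original form with the mark now resting on $t_1$. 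Composing these steps realizes the desired swap across a commuting pair while leaving the underlying factorization unchanged.

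With both adjacent-swap cases available, a standard bubble argument identical to the one in Proposition \ref{reposprop} moves the mark from any $\calR_i$-position to any other, which proves the proposition. The main obstacle, and the only place where G25 really differs from G4, is guaranteeing that a stepping-stone $u$ from a second triple is always present: the detour above fails precisely when every $\calR_i$-reflection occurring in the factorization lies in a single commuting triple. I would close this gap by showing that such a multiset cannot arise from a factorization of a Coxeter element, arguing that a product drawn entirely from one commuting triple together with reflections the mark never needs to cross lies in too small a subgroup to be a Coxeter element; failing a clean structural argument, the resulting finite list of exceptional multisets can be checked directly in Sage \cite{sage}, in keeping with the empirical verifications already used for G25.
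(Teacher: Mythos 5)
Your main line of argument---sort the marked element's class to the front with Proposition \ref{sortingprop}, shuttle the mark using Proposition \ref{repositionrem} when the adjacent same-class pair does not commute, and route the mark through a stepping-stone reflection from a different triple when the pair lies in a common triple---is the same strategy the paper uses, and your stepping-stone detour is exactly the paper's ``first method.'' The genuine gap is in your final paragraph. You reduce the remaining difficulty to the case where every $\calR_i$-reflection occurring in the factorization lies in a single commuting triple, and you propose to close it by showing that such factorizations of a Coxeter element cannot exist, or else by a finite computer check. Neither closure works: appending a block $(t,t,t)$ to any reflection factorization of $c$ yields a longer reflection factorization of $c$, so one can produce factorizations of a Coxeter element of arbitrary length in which all reflections of one class lie in a single triple (indeed, are all equal to one reflection $t$). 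The exceptional configurations are therefore neither structurally impossible nor finite in number, so the step you flag as the main obstacle really does fail as proposed.

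The paper escapes with an idea your proposal is missing: a second transfer method that uses the \emph{other} conjugacy class. If the marked element's entire class within the factorization sits inside one triple, pick a reflection of the other class that is not in the inverse triple (hence does not commute with the marked element), bring it adjacent, and perform a single Hurwitz move on that pair; this conjugates the marked element into a different triple of the same class, after which your stepping-stone argument applies. Both methods fail only when every reflection in the factorization lies in one triple together with its inverse triple, and the paper disposes of that residual case by a pigeonhole bound on the length combined with Proposition \ref{g25l6} (one could also observe that such a product lies in an abelian subgroup of exponent three and so cannot be a Coxeter element). You need this second method, or an equivalent replacement, before your degenerate case becomes genuinely degenerate.
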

	\begin{proof}
		Consider two marked factorizations which have the same underlying factorization.  Assume these marked elements are of the same conjugacy class, say $\calR_2$.  By Proposition \ref{sortingprop} we may select these two factorizations as $(t_1,\ldots,t_q^*,\ldots,t_i,t_{i+1},\ldots,t_m)$ and $(t_1,\ldots,t_p^*,\ldots,t_i,t_{i+1},\ldots,t_m)$ such that $t_j\in\calR_2$ if and only if $j\in[1,i]$ and $q\geq p$.  Suppose $t_p$ and $t_q$ are not in the same triple.  Then we may perform the following series of Hurwitz moves.
		\begin{align*}
		(t_1,\ldots,t_p,t_{p+1},\ldots,t_{q-1},t_q^*,\ldots)&\xrightarrow{\sigma_{q-1}^*}(t_1,\ldots,t_p,t_{p+1},\ldots,t_q^*,t_{q-1}',\ldots)\\
		&\quad\vdots\\
		&\xrightarrow{\sigma_{p+1}^*}(t_1,\ldots,t_p,t_q^*,\ldots)\\
		&\xrightarrow{(\sigma^*_p)^3}(t_1,\ldots,t_p^*,t_q,\ldots)\\
		&\xrightarrow{(\sigma^*_{p+1})^{-1}}\ldots\\
		&\quad \vdots\\
		&\xrightarrow{(\sigma_{q-1}^*)^{-1} }(t_1,\ldots,t_p^*,t_{p+1},\ldots,t_{q-1},t_q,\ldots)
		\end{align*}
		Suppose that $t_p$ and $t_q$ are in the same triple.  There are two possible methods of procedure.  The first is to find another reflection in the same conjugacy class but not in the same triple and transfer the mark to it using a similar process as above and then repeating this process to transfer the mark to $t_p$.
		
		The second method is to find a reflection of the other conjugacy class but not in the same triple and perform a single Hurwitz move on this consecutive pair.  Marked element will now be of the same conjugacy class but a different triple and the above procedure may be performed.
		
		Suppose neither method is possible.  By the pigeonhole principle, this means that there can be at most three reflections from each conjugacy class, meaning the total length of the factorization could be at most six.  However, Proposition \ref{g25l6} shows that this case is impossible.
	\end{proof}
	Thus, Theorem \ref{transthm} will apply if it can be shown that Conjecture \ref{genconj} applies.
	
\subsection{Counting factorizations}
	We gathered some additional data during the course of our research, but which was not relevant above, to do with counting the number and size of the Hurwitz orbits of reflection factorizations of Coxeter elements in G4 specifically.  These discoveries are presented here for the sake of completeness.
	
	Currently, there is no general formula for calculating the size of a Hurwitz orbit of a given reflection factorization.  There are, however, multiple methods for counting the total number of reflection factorizations of a given length within a group.  For instance, Bessis \cite[Prop. 7.6]{Bess15} proved that the orbit of a shortest reflection factorization has size $n!h^n/|W|$. More generally, Chapuy and Stump \cite{CS}, proved that the number of reflection factorizations of a given length of a Coxeter element in a group $W$ is given by the exponential generating function
	$$
	\frac{1}{|W|}\left(e^{q|\calR|/n}-e^{-q|\mathcal{A}|/n}\right)^n.
	$$
	A more refined result for the complex reflection groups was developed by Del Mas, Hameister, and Reiner \cite{DHR}.
 
	In the course of this research on G4, certain patterns were observed that gave rise to a more complete counting for the size of the orbit of a given factorization.  These findings are presented here.
	
	Since we are dealing exclusively with G4, we may specialize the formula of Chapuy and Stump to get the following remark.
	\begin{remark}\label{countingrem}
		The number of length-$m$ reflection factorizations of a Coxeter element in G4 is
		$$
		\frac{1}{24}\bigg(8^m-2^{m+1}+(-4)^m \bigg).
		$$ 
	\end{remark} 
	This follows from the following calculation using Chapuy and Stump's formula \cite{CS} using $|W|=24$, $|\calR|=8$, and $|\mathcal{A}|=4$.
	\begin{align*}
	\frac{1}{24}\big(e^{4q}-e^{-2q} \big)^2&=\frac{1}{24}(e^{8q}-2e^{2q}+e^{-4q})\\
	&=\frac{1}{24}\bigg(\sum_{i\geq0}\frac{8^iq^i}{i!}-2\sum_{i\geq0}\frac{2^iq^i}{i!}+\sum_{i\geq0}\frac{(-4)^iq^i}{i!}\bigg)\\
	&=\sum_{i\geq0}\bigg(\frac{8^i-2^{i+1}+(-4)^i}{24}\bigg)\frac{q^i}{i!}
	\end{align*}
	which has the desired coefficient.
	
	Let us consider the two conjugacy classes of reflections $\calR_1,\calR_2$ and the classes of Coxeter elements $\calC_1,\calC_2$.  Assign to each of the elements in these groups a value equal to the index of their class.
	\begin{proposition}\label{modsprop}
	 	The sum of the values of the reflections in a reflection factorization of a Coxeter element is congruent mod 3 to the value of the Coxeter element.
	\end{proposition}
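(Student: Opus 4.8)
The plan is to exploit the determinant map $\det\colon W\to\mathbb{C}^\times$, which is a group homomorphism and hence multiplicative across any factorization. First I would recall the eigenvalue data recorded in the G4 background and already used in the proof of Proposition \ref{3conrem}: every element of $\calR_1$ and $\calC_1$ has determinant $\omega$, while every element of $\calR_2$ and $\calC_2$ has determinant $\omega^2$, where $\omega=e^{2\pi i/3}$. The crucial observation is that this says precisely that the assigned value of any reflection or Coxeter element $x$ is the exponent $j\in\{1,2\}$ for which $\det(x)=\omega^j$.

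With this dictionary in hand the argument is immediate. Given a factorization $c=t_1t_2\cdots t_k$, I would take determinants of both sides and use multiplicativity to obtain
$$
\omega^{\,\mathrm{value}(c)}=\det(c)=\prod_{i=1}^{k}\det(t_i)=\prod_{i=1}^{k}\omega^{\,\mathrm{value}(t_i)}=\omega^{\,\sum_{i=1}^{k}\mathrm{value}(t_i)}.
$$
Since $\omega$ is a primitive third root of unity, $\omega^a=\omega^b$ holds exactly when $a\equiv b\pmod 3$. Comparing exponents in the displayed equation therefore yields $\mathrm{value}(c)\equiv\sum_{i=1}^{k}\mathrm{value}(t_i)\pmod 3$, which is the claim.

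There is essentially no serious obstacle here; the entire content lies in the first paragraph's identification of the value function with the determinant exponent. The only point requiring care is to confirm that this identification is consistent across both reflections and Coxeter elements, that is, that the index labelling of $\calC_1,\calC_2$ agrees with the determinant exponent in the same way the labelling of $\calR_1,\calR_2$ does. This is exactly the determinant computation already carried out in the proof of Proposition \ref{3conrem}, so I would cite that computation rather than repeat it. An equivalent phrasing, which I might prefer for conceptual clarity, is that $\det$ descends to an isomorphism from the abelianization of $W$ onto the cyclic group $\langle\omega\rangle$ of cube roots of unity, and that the value function is the composite of $x\mapsto\det(x)$ with the isomorphism $\langle\omega\rangle\cong\mathbb{Z}/3\mathbb{Z}$ sending $\omega^j$ to $j$; the proposition then reduces to the statement that this composite is a homomorphism.
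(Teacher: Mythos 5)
Your proposal is correct and follows essentially the same route as the paper's proof: both identify the value of an element with the exponent of $\omega$ in its determinant, use multiplicativity of the determinant across the factorization, and compare exponents modulo $3$. The remark about the abelianization is a nice conceptual gloss but adds nothing beyond the paper's argument.
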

	\begin{proof}
	 	Note that the determinants of elements from $\calR_1$ and $\calC_1$ are $\omega$ and those from $\calR_2$ and $\calC_2$ are $\omega^2$, meaning the value of an element in $\calR$ or $\calC$ corresponds to the exponent of $\omega$ in its determinant.  Also note that multiplication of $\omega$'s is equivalent to mod 3 addition of their exponents.  Recall that the product of the determinants of square matrices of the same size is the determinant of the product of those matrices.  The statement follows.
	\end{proof}
	This proposition allows us to define a method of thinking about factorizations in terms of things that are well-studied in the fields of combinatorics, namely multisets of integers, and number theory, namely modular addition.
	\begin{proposition}\label{numprop}
	 	For $m\geq2$, the number of Hurwitz orbits of length-$m$ reflection factorizations of a Coxeter element is $\lceil m/3 \rceil$.
	\end{proposition}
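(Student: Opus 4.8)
The plan is to convert the orbit count into a count of realizable multisets of conjugacy classes and then into elementary arithmetic. By Theorem \ref{transthm}, two length-$m$ reflection factorizations of $c$ lie in the same Hurwitz orbit exactly when they share the same multiset of conjugacy classes, so the number of orbits equals the number of distinct such multisets that are actually realized. Because G4 has only the two reflection classes $\calR_1$ and $\calR_2$, a multiset is recorded by a pair $(a,b)$ with $a+b=m$, where $a$ counts reflections from $\calR_1$ and $b$ those from $\calR_2$. The task thus becomes to determine which pairs $(a,b)$ occur and then to count them.

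First I would isolate the constraint. Giving $\calR_1$ the value $1$ and $\calR_2$ the value $2$, Proposition \ref{modsprop} says every realized multiset satisfies $a+2b\equiv v_c\pmod{3}$, where $v_c\in\{1,2\}$ is the value of $c$; by Reiner--Ripoll--Stump the orbit count is the same for every Coxeter element, so I may work with the reference element $c=AB$, for which $v_c=2$. I would then argue this congruence is the \emph{only} restriction, so that every pair with $a,b\geq 0$, $a+b=m$, and $a+2b\equiv v_c\pmod{3}$ is realized. The mechanism is the length-raising construction recorded earlier, which sends a length-$m$ factorization $(t_1,\ldots,t_m)$ of $c$ to the length-$(m+1)$ factorization $(t_1^2,t_1^2,t_2,\ldots,t_m)$. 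Since squaring interchanges $\calR_1$ and $\calR_2$, this move carries the multiset $(a,b)$ to $(a-1,b+2)$ or to $(a+2,b-1)$ according to the class of $t_1$, and both transitions preserve $a+2b$ modulo $3$, as they must.

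The realizability claim then follows by induction on $m$. The base case $m=2$ is immediate: the only admissible pair is $(2,0)$, realized by the shortest factorization $(A,B)$. For the step, let $(a',b')$ be admissible of length $m+1\geq 3$; since $a'+b'\geq 3$, at least one of $a'\geq 2$ or $b'\geq 2$ holds, and the corresponding predecessor, $(a'-2,b'+1)$ or $(a'+1,b'-2)$, is an admissible pair of length $m$ and hence realizable by the inductive hypothesis. Using Proposition \ref{sortingprop} to bring a reflection of the required class to the front and then applying the length-raising move reaches $(a',b')$ exactly. This realizability induction is the step I expect to carry the genuine content of the argument.

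Finally I would count the admissible pairs. These correspond to the integers $a\in\{0,1,\ldots,m\}$ in the single residue class $a\equiv 2m-v_c\pmod{3}$, paired with $b=m-a$. A short case check on $m$ modulo $3$ shows this count is always $\lceil m/3\rceil$; equivalently, writing $N(m)$ for the number of orbits, the required residue is unchanged when $m$ increases by $3$, so enlarging the range from $\{0,\ldots,m\}$ to $\{0,\ldots,m+3\}$ adds exactly one admissible value, giving $N(m+3)=N(m)+1$, which with $N(2)=N(3)=1$ and $N(4)=2$ yields $N(m)=\lceil m/3\rceil$ for all $m\geq 2$. The main obstacle is the sufficiency of the congruence, i.e.\ the realizability induction; the counting itself is routine, and its answer does not depend on which value $v_c$ takes.
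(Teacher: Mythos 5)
Your proposal is correct and follows the same basic outline as the paper's proof: identify Hurwitz orbits with realized multisets of conjugacy classes via Theorem~\ref{transthm}, impose the determinant congruence of Proposition~\ref{modsprop}, and count residues in an interval. The one substantive difference is that you prove the converse direction --- that \emph{every} pair $(a,b)$ satisfying the congruence is actually realized by a factorization of $c$ --- by induction on $m$, using Proposition~\ref{sortingprop} together with the length-raising move $(t_1,\ldots,t_m)\mapsto(t_1^2,t_1^2,t_2,\ldots,t_m)$ and checking that the transitions $(a,b)\mapsto(a-1,b+2)$ and $(a,b)\mapsto(a+2,b-1)$ suffice to reach every admissible pair from the base case. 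The paper's proof only observes that every sum in $[m,2m]$ is attained by \emph{some} abstract multiset $\{1^a,2^b\}$ and then counts those congruent to the target residue; it does not verify that each such multiset arises from an actual reflection factorization of the Coxeter element. Your realizability induction fills exactly that gap (using a remark the paper states in Section~\ref{P.S.} but never applies here), so your argument is the more complete of the two; the final residue count, whether phrased in terms of $a$ or of the sum $a+2b$, is the same routine computation in both.
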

	\begin{proof}
	 	We prove that the number of multisets $\{1^a,2^b\}$ with $a+b=m$ and sum congruent to 1 mod 3 is $\lceil m/3\rceil$.  The proof for 2 is very similar and Proposition \ref{modsprop} shows that 3 is impossible for Coxeter elements.  The minimum value for the sum of the numbers in the multiset is $m$, which is reached if all $m$ values are 1's.  Every value up to and including $2m$ can be reached by replacing a 1 with a 2.  Thus, we need only count the number of values in the range $[m,\,2m]$ that are congruent to 1 mod 3.  If $\,m\equiv0\pmod3$, then the range has $m/3$ values congruent to 1, namely $m+1+3(0),\,m+1+3(1),\ldots,\,m+1+3(m/3-1)$.  If $\,m\equiv1\pmod3$ then the values congruent to 1 are $m+3(0),\,m+3(1),\ldots,\,m+3(\lfloor m/3\rfloor)$.  If $m\equiv2\pmod3$ the values are very similar.  In each case, the number of values congruent to 1 mod 3 was $\lceil m/3\rceil$ as desired.
	\end{proof}
	This use of multisets also informs us as to the relative size of these orbits through the use of binomials.  Data for $m\leq7$ suggests the following conjecture.
	\begin{conjecture}\label{ratiorem}
		 Let $T_1$ and $T_2$ be length-$m$ reflection factorizations of a Coxeter element $c$ with multisets of conjugacy class values, $\{1^a, 2^b \},\{1^{a'},2^{b'} \}$.  The ratios of the sizes of their orbits is equal to the ratio $\binom{a}{a+b}/\binom{a'}{a'+b'}$.
	\end{conjecture}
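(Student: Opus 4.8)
The plan is to recast the statement as an exact enumeration and then evaluate it by character theory. First, by Theorem~\ref{transthm} the Hurwitz orbit of any length-$m$ factorization of a fixed Coxeter element $c$ with multiset of conjugacy class values $\{1^a,2^b\}$ consists of \emph{every} length-$m$ reflection factorization of $c$ having that same multiset. Hence the orbit size equals $N(a,b)$, the number of tuples $(t_1,\ldots,t_m)$ with $t_1\cdots t_m=c$, exactly $a$ of the $t_i$ in $\calR_1$ and exactly $b$ in $\calR_2$, where $m=a+b$. Interpreting the binomial coefficient in the statement as $\binom{a+b}{a}$, the number of orderings of the multiset $\{1^a,2^b\}$, the conjecture becomes the assertion that $N(a,b)=\binom{a+b}{a}\,C_m$ for a constant $C_m$ depending only on the length $m$.

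Next I would factor out the ordering. For a \emph{fixed} assignment of classes to the $m$ positions, let $f(a,b)$ be the number of factorizations of $c$ with that prescribed ordered class sequence. The Frobenius class-algebra formula
$$
f(a,b)=\frac{|\calR_1|^{a}\,|\calR_2|^{b}}{|W|}\sum_{\chi}\frac{\chi(\calR_1)^{a}\,\chi(\calR_2)^{b}\,\overline{\chi(c)}}{\chi(1)^{m-1}},
$$
with the sum over the irreducible characters $\chi$ of $W$, shows that this count is symmetric in the ordering of the prescribed sequence, so $f$ depends only on the pair $(a,b)$. Summing over the $\binom{a+b}{a}$ orderings gives $N(a,b)=\binom{a+b}{a}\,f(a,b)$, and the conjecture reduces to the single claim that $f(a,b)$ is independent of the split $(a,b)$ once $m=a+b$ is fixed, over the multisets permitted by Proposition~\ref{modsprop}.

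To prove that independence I would evaluate the character sum from the character table of G4, whose seven irreducibles have degrees $1,1,1,2,2,2,3$. Three features should force the collapse. Because the two reflection classes have equal size, the prefactor $|\calR_1|^{a}|\calR_2|^{b}=4^{m}$ is already independent of the split. The three-dimensional character vanishes on every reflection and so drops out of the sum entirely. Finally, the three two-dimensional irreducibles are the twists of the two-dimensional reflection character by the linear characters, and they match the three linear characters $\psi$ bijectively: for each $\psi$ there is a two-dimensional $\chi$ with $\chi(\calR_1)=-\psi(\calR_1)$, $\chi(\calR_2)=-\psi(\calR_2)$, and $\chi(c)=\psi(c)$, so the matched two-dimensional term equals $\frac{(-1)^{m}}{2^{m-1}}$ times its linear partner. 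The whole sum is therefore $\left(1+\frac{(-1)^{m}}{2^{m-1}}\right)$ times the linear contribution $1+\omega^{1+a+2b}+\omega^{2+2a+b}$, where $\omega=e^{2\pi i/3}$; these exponents reduce modulo $3$ to functions of the value sum $a+2b$ tracked by Proposition~\ref{modsprop}, which is constant across the admissible multisets, so the linear contribution evaluates to $3$. I therefore expect a short calculation to yield
$$
f(a,b)=\frac{4^{m}}{8}\left(1+\frac{(-1)^{m}}{2^{m-1}}\right),
$$
a function of $m$ alone, which completes the reduction.

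The crux is exactly this last collapse: verifying that the character values conspire to erase all dependence on how the $m$ reflections are split between $\calR_1$ and $\calR_2$. Everything else is formal, but this step is genuinely special to G4, since it uses both the equality of the reflection class sizes and the tensor-twist pairing of one- and two-dimensional irreducibles by linear characters; this is why one should not expect the clean binomial ratio to survive verbatim in groups with unequal class sizes. As a consistency check I would confirm that summing $N(a,b)=\binom{a+b}{a}f(a,b)$ over the admissible multisets reproduces the Chapuy--Stump total of Remark~\ref{countingrem}, namely $\tfrac{1}{24}\!\left(8^{m}-2^{m+1}+(-4)^{m}\right)$, via a routine roots-of-unity filter applied to $\sum_{a}\binom{m}{a}$.
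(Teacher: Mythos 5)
The paper does not actually prove Conjecture~\ref{ratiorem}: it is left open, with only the remark that a proof ``should be similar to those by Chapuy--Stump and delMas--Hameister--Reiner.'' There is therefore no argument in the paper to compare yours against; what you have written is a genuine proof sketch of the open statement, and it carries out precisely the program that remark gestures at. I find no gap in it. The reduction is sound: Theorem~\ref{transthm} identifies the Hurwitz orbit of a factorization with the set of \emph{all} length-$m$ factorizations of $c$ having the same multiset; the Frobenius class-algebra formula is symmetric in the prescribed class sequence; and so the orbit size is $\binom{a+b}{a}$ (the only sensible reading of the paper's $\binom{a}{a+b}$) times a character sum $f(a,b)$. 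The character-table facts you invoke are all correct for G4 $\cong \mathrm{SL}_2(\mathbb{F}_3)$: the degree-$3$ character is pulled back from the standard character of $A_4$ and vanishes on the order-$3$ classes, hence on every reflection, and with $V$ the reflection representation one checks $\chi_{V\otimes\det^{k+1}}(\calR_1)=-\det^k(\calR_1)$, $\chi_{V\otimes\det^{k+1}}(\calR_2)=-\det^k(\calR_2)$, and $\chi_{V\otimes\det^{k+1}}(c)=\det^k(c)$, which is exactly your pairing of two-dimensional and linear characters. The linear contribution equals $3$ precisely on the multisets admitted by Proposition~\ref{modsprop}, giving $f(a,b)=\frac{4^m}{8}\bigl(1+\frac{(-1)^m}{2^{m-1}}\bigr)$ independent of the split; this checks against Remark~\ref{countingrem} for $m=2,3,4$ (totals $3$, $18$, $180$) and against Bessis's orbit size $n!\,h^n/|W|=3$ at $m=2$. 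Two observations: your argument proves the stronger exact statement (an explicit orbit-size formula $\binom{m}{a}f(a,b)$, not merely the ratio), and it rests on Theorem~\ref{transthm} --- legitimately, since the conjecture appears after that theorem is established --- so it settles the conjecture in G4, which is the setting the paper intends; as you note, the collapse uses the equal reflection-class sizes and the tensor-twist pairing, so the clean binomial ratio should not be expected to generalize verbatim.
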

	The proof should be similar to those by Chapuy-Stump and delMas-Hameister-Reiner, but is beyond the scope of this paper.
	
	We may combine these results to get a complete counting for the size and number of Hurwitz orbits of length $n$ reflection factorizations of a Coxeter element in G4.

	\section*{Acknowledgments}
	The author would like thank Svetlana Rudenko for encouraging him to start this project.  Additionally, he would like to thank Elizabeth Drellich and Victor Reiner for contributions made in conversations.  In particular, he would like to thank Dr. Drellich for providing the code for Figure \ref{G4im}.  Finally, he would like to thank Joel Brewster Lewis for major contributions of mentorship and guidance.

	The author acknowledges the sponsorship of the GW Data MASTER Program, supported by the National Science Foundation under grant DMS-1406984.
	
\end{document}